\newcommand{\Mfr}{\mathfrak{M}}
\newcommand{\cV}{\mathcal{V}}
\newcommand{\cH}{\mathcal{H}}
\newcommand{\cZ}{\mathcal{Z}}
\newcommand{\cO}{\mathcal{O}}
\newcommand{\cF}{\mathcal{F}}
\newcommand{\cB}{\mathcal{B}}
\newcommand{\cD}{\mathcal{D}}
\newcommand{\cL}{\mathcal{L}}
\newcommand{\cC}{\mathcal{C}}
\newcommand{\cS}{\mathcal{S}}
\newcommand{\DD}{\mathrm{D}}
\newcommand{\Pic}{\mathrm{Pic}}
\newcommand{\Gal}{\mathrm{Gal}}
\newcommand{\lra}{\longrightarrow}
\newcommand{\ra}{\rightarrow}
\newcommand{\PP}{\mathbb{P}}
\newcommand{\AAA}{\mathbb{A}}
\newcommand{\ZZ}{\mathbb{Z}}
\newcommand{\QQ}{\mathbb{Q}}
\newcommand{\NN}{\mathbb{N}}
\newcommand{\RR}{\mathbb{R}}
\newcommand{\CC}{\mathbb{C}}
\newcommand{\GL}{\mathrm{GL}}
\newcommand{\PGL}{\mathbb{P}\mathrm{GL}}
\newcommand{\SL}{\mathrm{SL}}
\newcommand{\slfr}{\mathfrak{sl}}
\newcommand{\Mat}{\mathrm{Mat}}
\newcommand{\ord}{\mathrm{ord}}
\def\map#1{\ \smash{\mathop{\longrightarrow}\limits^{#1}}\ }
\theoremstyle{plain}
\newtheorem{thm}{Theorem}[section]
\newtheorem{lem}[thm]{Lemma}
\newtheorem{prop}[thm]{Proposition}
\newtheorem{cor}[thm]{Corollary}
\newtheorem{rem}[thm]{Remark}
\let\mathbb=\mathbf
\begin{document}

\title[]{On the monodromy of the Hitchin connection}

\begin{abstract}
For any genus $g \geq 2$ we give an example of a family of smooth complex projective curves of genus $g$ such that 
the image of the monodromy representation of the Hitchin connection
on the sheaf of generalized $\SL(2)$-theta functions of level $l \not= 1,2,4$ and $8$ contains an 
element of infinite order.
\end{abstract}

\author{Yves Laszlo}

\author{Christian Pauly}

\author{Christoph Sorger}

\address{D\'epartement de Math\'ematiques B\^at. 425 \\
Universit\'e Paris-Sud \\
91405 Orsay Cedex \\
France}

\email{yves.laszlo@math.u-psud.fr}

\address{Laboratoire de Math\'{e}matiques J.-A. Dieudonn\'e \\ Universit\'e de Nice - Sophia Antipolis \\
06108 Nice Cedex 02 \\ France}

\email{pauly@unice.fr}

\address{Laboratoire de Math\'{e}matiques Jean Leray \\ Universit\'{e} de Nantes \\ 2, rue de la Houssini\`{e}re \\ BP 92208 \\
44322 Nantes Cedex 03 \\ France}

\email{christoph.sorger@univ-nantes.fr}

\thanks{Partially supported by ANR grant G-FIB}


\subjclass[2000]{Primary 14D20, 14H60, 17B67}

\maketitle

\bigskip

\section{Introduction}

Let $\pi:\cC\ra\cB$ be a family of smooth connected complex projective curves
of genus $g\geq 2$ parameterized by a smooth complex manifold $\cB$. 
For any integers $l \geq 1$, called the level, and $r \geq 2$ we denote $\cZ_l$ the
complex vector bundle over $\cB$ having fibers
$H^0(\mathrm{M}_{\cC_b}(\SL(r)), \cL^{\otimes l})$, where
$\mathrm{M}_{\cC_b}(\SL(r))$ is the moduli space of semistable rank-$r$
vector bundles with trivial determinant over the curve $\cC_b=\pi^{-1}(b)$ for $b\in\cB$
and $\cL$ is the ample generator of its Picard group.
Following Hitchin \cite{H}, the bundle $\cZ_l$ is equipped with a
projectively flat connection called the Hitchin connection.

\bigskip

The main result of this paper is the following

\bigskip

{\bf Theorem.} {\em Assume that the level $l \not= 1,2,4$ and $8$ and that the rank $r=2$. 
For any genus $g \geq 2$ there exists a 
family $\pi: \cC \ra \cB$
of smooth complex connected projective curves of genus $g$ such that the monodromy representation of the 
Hitchin connection
$$ \rho_l : \pi_1(\cB,b) \lra \PP \GL(\cZ_{l,b})$$
has an element of infinite order in its image.}

\bigskip

For any genus $g \geq 2$ we give an example of a family  $\pi: \cC \ra \cB$ of smooth hyperelliptic 
curves of genus $g$ and an explicit
element $\xi \in \pi_1(\cB,b)$ with image of infinite order (see Remark \ref{explicitfamily}).

\bigskip
In the context of Witten-Reshetikhin-Turaev Topological Quantum Field Theory as defined by Blanchet-Habegger-Masbaum-Vogel \cite{BHMV}, the analogue of the above theorem is well-known due to work of Masbaum \cite{Mas}, who exhibited an explicit element of the mapping class group with image of infinite order. Previously, Funar \cite{F} had shown by a
different argument the somewhat weaker result that the image of the mapping class group is an infinite group.

It is enough to show the above theorem in the context of Conformal Field Theory as defined by Tsuchiya-Ueno-Yamada \cite{TUY}: following a result of the first author \cite{L}, the monodromy representation associated to Hitchin's connection coincides with the monodromy representation of the WZW connection. In a series of papers by
Andersen and Ueno (\cite{AU1}, \cite{AU2}, \cite{AU3} and \cite{AU4}) it has been shown recently that
the above Conformal Field Theory and the above Topological Quantum Field Theory are equivalent.
Therefore the above theorem also follows from that identification and the work of Funar and Masbaum.

In this short note, we give a direct algebraic proof, avoiding the above identification: we first recall Masbaum's initial argument applied to Tsuchiya-Kanie's description of the monodromy representation for the WZW connection in the case of the projective line with 4 marked points (see also \cite{AMU}). Then we observe that the sewing procedure induces a projectively flat map between sheaves of conformal blocks, enabling us to increase the genus of the curve.

\bigskip

A couple of words about the exceptional levels $l=1,2,4,8$ are in order.
For $l=1$ the monodromy representation $\rho_1$ is finite for any $g$. This follows from the fact that the Beauville-Narasimhan-Ramanan \cite{BNR} strange duality isomorphism 
$\PP H^0(\mathrm{M}_{\cC_b}(\SL(2)), \cL)^\dagger \stackrel{\sim}{\rightarrow}
\PP  H^0(\Pic^{g-1}(\cC_b), 2 \Theta)$ is projectively flat over $\cB$ for any family $\pi : \cC \ra \cB$ (see e.g. \cite{Bel1}) and that
$\rho_1$ thus identifies with the monodromy representation on a space of abelian theta functions, which is known to have finite image (see e.g. \cite{W}).
For $l=2$ there is a canonical morphism $H^0(\mathrm{M}_{\cC_b}(\SL(2)), \cL^{\otimes 2}){\rightarrow}
H^0(\Pic^{g-1}(\cC_b), 4\Theta)_+$, which is an isomorphism if and only if $\cC_b$ has no vanishing theta-null \cite{B2}. But this map is not projectively flat having non-constant rank. So the
question about finiteness of $\rho_2$ remains open --- see also \cite{Bel2}.
For $l=4$ there is a canonical isomorphism \cite{OP}, \cite{AM} between the dual 
$H^0(\mathrm{M}_{\cC_b}(\SL(2)), \cL^{\otimes 4})^\dagger$ and a space
of abelian theta functions of order $3$. We expect this isomorphism to be projectively flat. For $l=8$ no 
isomorphism with spaces of abelian theta functions seems to be known.

\bigskip

Our motivation to study the monodromy representation of the Hitchin connection comes from the Grothendieck-Katz conjectures
on the $p$-curvatures of a local system \cite{K}. In a forthcoming paper we will discuss the  consequences of the above theorem in this set-up.

\bigskip

{\bf Acknowledgements:}  We would like to thank Jean-Beno\^it Bost, Louis Funar and Gregor
Masbaum for helpful conversations and an anonymous referee for useful remarks on a first version of this paper.

\bigskip

\section{Review of mapping class groups, moduli spaces of  pointed curves  and braid groups}

\subsection{Mapping class groups}

In this section we recall the basic definitions and properties of the mapping class groups. We
refer the reader e.g. to \cite{I} or \cite{HL}.

\subsubsection{Definitions}

Let $S$ be a compact oriented surface of genus $g$ without boundary and with $n$ marked points $x_1, \ldots , x_n \in S$.
Associated to the $n$-pointed surface $S$ are the mapping class groups $\Gamma_g^n$ and $\Gamma_{g,n}$
defined as the groups of isotopy classes of orientation-preserving diffeomorphisms $\phi: S \ra S$ such that
$\phi(x_i) = x_i$ for each $i$, respectively such that $\phi(x_i) = x_i$ and the differential $d\phi_{x_i} : T_{x_i} S \ra
T_{x_i} S$  at the point $x_i$ is the identity map for each $i$.

\bigskip

An alternative definition of the mapping class groups $\Gamma_g^n$ and $\Gamma_{g,n}$ can be given
in terms of surfaces with boundary. We consider the surface $R$ obtained from $S$ by removing a small
disc around each marked point $x_i$. The boundary $\partial R$ consists
of $n$ circles.  Equivalently, the groups  $\Gamma_g^n$ and $\Gamma_{g,n}$  coincide with the
groups of isotopy classes of orientation-preserving diffeomorphisms $\phi: R \ra R$ such that
$\phi$ preserves each boundary component of $R$, respectively such that  $\phi$ is the identity on $\partial R$.

\bigskip

The mapping class group $\Gamma_g$ is defined to be $\Gamma_g^0 = \Gamma_{g,0}$.

\subsubsection{Dehn twists}

Given an (unparametrized) oriented, embedded circle $\gamma$ in $R \subset S$ we can associate to it a diffeomorphism $T_\gamma$ up
to isotopy, i.e., an element $T_\gamma$ in the mapping class groups $\Gamma_g^n$ and $\Gamma_{g,n}$, the so-called
Dehn twist along the curve $\gamma$. It is known that the mapping class groups  $\Gamma_g^n$ and $\Gamma_{g,n}$ are
generated by a finite number of Dehn twists. We recall the following exact sequence
$$ 1 \lra \ZZ^n \lra \Gamma_{g,n} \lra \Gamma_g^n \lra 1.$$
The $n$ generators of the abelian kernel $\ZZ^n$ are given by the Dehn twists $T_{\gamma_i}$,
where $\gamma_i$ is a loop going around
the boundary circle associated to $x_i$ for each $i$.

\subsubsection{The mapping class groups $\Gamma_0^4$ and $\Gamma_{0,4}$}

Because of their importance in this paper we recall the presentation of the mapping class
groups $\Gamma_0^4$ and $\Gamma_{0,4}$ by generators and relations. Keeping the notation of the previous section,
we denote by $R$ the $4$-holed sphere and by $\gamma_1, \gamma_2, \gamma_3, \gamma_4$ the circles in $R$ around the four
boundary circles. We denote by $\gamma_{ij}$ the circle dividing $R$ into two parts containing two holes each and such that
the two circles $\gamma_i$ and $\gamma_j$ are in the same part. It is known (see e.g. \cite{I} section 4) that $\Gamma_{0,4}$ is generated by the Dehn twists $T_{\gamma_i}$ for $1 \leq i \leq 4$ and $T_{\gamma_{ij}}$ for $1 \leq i,j \leq 3$
and that, given a suitable orientation of the circles $\gamma_i$ and $\gamma_{ij}$,
there is a relation  (the {\em lantern} relation)
$$ T_{\gamma_1} T_{\gamma_2} T_{\gamma_3} T_{\gamma_4} = T_{\gamma_{12}} T_{\gamma_{13}} T_{\gamma_{23}}.$$
Note that the images of the Dehn twists $T_{\gamma_i}$ under the natural homomorphism
$$ \Gamma_{0,4} \longrightarrow \Gamma_0^4 , \qquad T_\gamma \mapsto \overline{T}_\gamma, $$
are trivial. Thus the group
$\Gamma_0^4$ is generated by the three Dehn twists $\overline{T}_{ij}$ for $1 \leq i,j \leq 3$ with the relation
$\overline{T}_{\gamma_{12}} \overline{T}_{\gamma_{13}} \overline{T}_{\gamma_{23}} = 1$.

\bigskip

For each $4$-holed sphere being contained in a closed genus $g$ surface without
boundary one can consider the Dehn twists $T_{ij}$ as elements in the mapping class
group $\Gamma_g$.

\subsection{Moduli spaces of curves}

Let $\Mfr_{g,n}$ denote the moduli space parameterizing $n$-pointed smooth projective curves of genus $g$. The moduli
space $\Mfr_{g,n}$ is a (possibly singular) algebraic variety. It can also be thought of as an orbifold 
(or Deligne-Mumford stack) and one has an isomorphism
\begin{equation} \label{isomodmcg}
j: \pi_1 (\Mfr_{g,n},x) \map{\sim} \Gamma_g^n,
\end{equation}
where $\pi_1 (\Mfr_{g,n},x)$ stands for the orbifold fundamental group of $\Mfr_{g,n}$. In case the
space $\Mfr_{g,n}$ is a smooth algebraic variety, the orbifold fundamental group coincides with the usual fundamental group.

\subsection{The isomorphism between $\pi_1(\Mfr_{0,4},x)$ and $\Gamma_0^4$}

The moduli space $\Mfr_{0,4}$ parameterizes ordered sets of $4$ points on the complex projective line $\PP_\CC^1$ up to the diagonal action
of $\PP \GL(2, \CC)$. The cross-ratio induces an isomorphism with the projective line $\PP_\CC^1$ with $3$ punctures at $0,1$ and $\infty$
$$\Mfr_{0,4} \map{\sim} \PP^1_\CC \setminus \{ 0,1, \infty \}. $$
We deduce that the fundamental group  of $\Mfr_{0,4}$ is the group with three  generators
$$ \pi_1(\Mfr_{0,4}, x) = \langle \sigma_1 , \sigma_2 , \sigma_3 \ | \ \sigma_3 \sigma_2 \sigma_1 = 1 \rangle, $$
where $\sigma_1, \sigma_2$  and $\sigma_3$ are the loops starting at $x \in
\PP^1_\CC \setminus \{ 0,1, \infty \}$ and going once around the points $0,1$ and
$\infty$ with the same orientation. We choose the orientation such that the generators $\sigma_i$ satisfy the
relation $\sigma_3 \sigma_2 \sigma_1 = 1$. Clearly $\pi_1(\Mfr_{0,4}, x)$ coincides with the
fundamental group $\pi_1(Q,x)$ of the $3$-holed sphere $Q$.

\bigskip
In this particular case the isomorphism $j: \pi_1 (\Mfr_{0,4},x) \map{\sim} \Gamma_0^4$ can be explicitly described as follows
(see e.g. \cite{I} Theorem 2.8.C):
we may view the $3$-holed sphere $Q$ as the union of the $4$-holed sphere $R$ with a disc $D$ glued on the boundary corresponding to the point $x_4$. Given a loop $\sigma \in \pi_1(Q,x)$ we may find an isotopy $\{ f_t : Q \ra Q \}_{0 \leq t \leq 1}$ such that
the map $t \mapsto f_t(x)$ coincides with the loop $\sigma$, $f_0 = \mathrm{id}_Q$ and $f_1(D) = D$. Then the isotopy class of
$f_1$ resticted to $R \subset Q$ determines an element $j(\sigma) = [f_1] \in  \Gamma_0^4$. Moreover, with the
previous notation, we have the
equalities (see e.g. \cite{I} Lemma 4.1.I)
$$j(\sigma_1) = \overline{T}_{\gamma_{23}}, \qquad j(\sigma_2) = \overline{T}_{\gamma_{13}}, \qquad j(\sigma_3) =
\overline{T}_{\gamma_{12}}.$$

\begin{rem}
{\em At this stage we observe that under the isomorphism $j$ the two elements $\sigma_1^{-1} \sigma_2 \in \pi_1 (\Mfr_{0,4},x)$
and $\overline{T}_{\gamma_{23}}^{-1} \overline{T}_{\gamma_{13}} \in \Gamma^4_0$ coincide. It was shown by G. Masbaum in \cite{Mas} that the
latter element has infinite order in the TQFT-representation of the mapping class group $\Gamma_g$ --- note that
$T_{\gamma_{23}}^{-1} T_{\gamma_{13}}$ also makes sense in $\Gamma_g$. We will show in Proposition \ref{sigmainf} that the loop $\sigma_1^{-1} \sigma_2$
has infinite order in the monodromy representation of the WZW connection. }
\end{rem}

\subsection{Braid groups and configuration spaces}

We recall some basic results
about braid groups and configuration spaces. We refer the reader e.g. to \cite{KT} Chapter 1.

\subsubsection{Definitions} \label{definitionconfspace}

The braid group $B_n$ is the group generated by $n-1$ generators $g_1, \ldots, g_{n-1}$ and the
relations
$$ g_i g_{i+1} g_i = g_{i+1} g_i g_{i+1}, \ 1 \leq i \leq n-2, \qquad \text{and} \qquad g_ig_j = g_j g_i, \
|i-j| \geq 2.$$
The pure braid group is the kernel $P_n = \ker (B_n \ra \Sigma_n)$ of the group
homomorphism which associates to the generator $g_i$ the transposition $(i,i+1)$ in the symmetric group $\Sigma_n$.
The braid groups $B_n$ and $P_n$ can be identified with the
fundamental groups
$$ P_n = \pi_1(X_n, p), \qquad B_n = \pi_1(\overline{X}_n, \overline{p}), $$
where $X_n$ and $\overline{X}_n$ are the complex manifolds parameterizing ordered respectively unordered $n$-tuples
of distinct points in the complex plane
$$ X_n = \{ (z_1,z_2, \ldots,  z_n) \in \CC^n \ | \ z_i \not= z_j \} \qquad
\text{and} \qquad \overline{X}_n = X_n / \Sigma_n . $$
The points $p = (z_1, \ldots, z_n)$ and $\overline{p} = p \ \mathrm{mod} \ \Sigma_n$ are base 
points in $X_n$ and $\overline{X}_n$. There are natural
inclusions $B_n \hookrightarrow B_{n+1}$, which induce inclusions on the pure braid groups
$\iota : P_n \hookrightarrow P_{n+1}$.

Over the variety $X_n$ there is an universal family 
\begin{equation} \label{univfam}
\cF_{n +1} = (\pi :\cC = X_n \times \PP^1 \ra X_n; s_1, \ldots, s_n, s_\infty),
\end{equation}
parameterizing $n+1$ distinct points on the projective line $\PP^1$. The section
$s_i$ is given by the natural projection $X_n \ra \CC$ on the $i$-th component followed by the
inclusion $\CC \subset \PP^1_\CC = \CC \cup \{ \infty \}$ and $s_\infty$ is the constant section corresponding 
to $\infty \in \PP^1_\CC$.

\subsubsection{Relation between the pure braid group $P_3$ and the fundamental group
$\pi_1(\Mfr_{0,4},x)$} \label{M04}

The natural map
$$\Mfr_{0,4} = \PP^1_\CC \setminus \{ 0,1, \infty \} \lra X_3, \qquad z \mapsto (0,1,z)$$
induces a group homomorphism at the level of fundamental groups
$$\Psi : \pi_1(\Mfr_{0,4}, x) = \langle \sigma_1 , \sigma_2 \rangle \lra
P_3 = \pi_1(X_3, p_3),$$
with $p_3 = (0,1,x)$. Then $\Psi$ is a monomorphism by \cite{KT} Theorem 1.16. Moreover,
the image of $\Psi$ coincides with the kernel of the natural group homomorphism
$$ \mathrm{im} \ \Psi  = \ker \left( P_3 = \pi_1(X_3, p_3) \lra P_2 = \pi_1(X_2, p_2) \right) $$
induced by the projection onto the first two factors $X_3 \ra X_2$, $(z_1, z_2, z_3)
\mapsto (z_1, z_2)$ and $p_2 = (0,1)$. One computes explicitly (see \cite{KT} section 1.4.2)  that
$$ \Psi(\sigma_1) = g_2 g_1^2  g_2^{-1}, \qquad \text{and} \qquad  \Psi(\sigma_2) = g_2^2.$$
For later use we introduce the element
\begin{equation} \label{defsigma}
\sigma = \sigma_1^{-1} \sigma_2 \in \pi_1(\Mfr_{0,4}, x).
\end{equation}

\section{Conformal blocks and the projective WZW connection}

\subsection{General set-up}

We consider the simple Lie algebra $\slfr(2)$. The set of irreducible $\slfr(2)$-modules, i.e. the set of
dominant weights of $\slfr(2)$ equals 
$$ P_+ = \{ \lambda = m \varpi  \ | \ m \in \NN \}, $$
where $\varpi$ is the fundamental weight of $\slfr(2)$, which 
corresponds to the standard $2$-dimensional representation of $\slfr(2)$.
We fix an integer $l \geq 1$, called the level, and introduce the finite set 
$P_l = \{ \lambda \in P_+ \ | \  m \leq l \}$. Given any $\lambda \in P_l$
we denote by $\lambda^\dagger \in P_l$ the dominant weight of the dual $V_\lambda^\dagger$ of the 
$\slfr(2)$-module $V_\lambda$ with dominant weight $\lambda$. Note that $\lambda^\dagger = \lambda$. 
Given an integer $n \geq 1$, a
collection $\vec{\lambda} =  (\lambda_1, \ldots ,
\lambda_n) \in (P_l)^n$ of dominants weights of $\slfr(2)$ and a family
$$ \cF = ( \pi: \cC \ra \cB ; s_1, \ldots , s_n ; \xi_1 , \ldots , \xi_n) $$
of $n$-pointed stable curves of arithmetic genus $g$ parameterized by a base variety $\cB$ with sections $s_i : \cB \ra \cC$ and
formal coordinates $\xi_i$ at the divisor $s_i(\cB) \subset \cC$, one constructs (see \cite{TUY} section 4.1)
a locally free sheaf
$$\cV^\dagger_{l, \vec{\lambda}}(\cF)$$
over the base variety $\cB$, called
the {\em sheaf of conformal blocks} or the {\em sheaf of vacua}. We recall that
$\cV^\dagger_{l, \vec{\lambda}}(\cF)$ is a subsheaf of $\cO_{\cB} \otimes
\cH^\dagger_{\vec{\lambda}}$, where $\cH^\dagger_{\vec{\lambda}}$ denotes the dual
of the tensor product $\cH_{\vec{\lambda}} = \cH_{\lambda_1} \otimes \cdots \otimes
\cH_{\lambda_n}$ of the integrable highest weight representations $\cH_{\lambda_i}$ of
level $l$ and weight $\lambda_i$ of the affine Lie algebra $\widehat{\slfr(2)}$.
The formation of the sheaf of
conformal blocks commutes with base change. In particular, we have for any point $b \in \cB$
$$  \cV^\dagger_{l, \vec{\lambda}}(\cF) \otimes_{\cO_{\cB}} \cO_b \cong  \cV^\dagger_{l, \vec{\lambda}}(\cF_b), $$
where $\cF_b$ denotes the data $(\cC_b = \pi^{-1}(b); s_1(b), \ldots , s_n(b); \xi_{1|\cC_b}, \ldots , \xi_{n|\cC_b})$
consisting of a stable curve $\cC_b$ with $n$ marked points $s_1(b), \ldots, s_n(b)$ and formal coordinates
$\xi_{i|\cC_b}$ at the points $s_i(b)$.

We recall that the sheaf of conformal blocks $ \cV^\dagger_{l, \vec{\lambda}}(\cF)$  does not depend (up to a canonical isomorphism) on the formal coordinates $\xi_i$ (see e.g. \cite{U} Theorem 4.1.7). We therefore omit the formal 
coordinates in the notation.

\subsection{The projective WZW connection} \label{WZWconnection}

We now outline the definition of the projective WZW connection on the sheaf
$\cV^\dagger_{l,\vec{\lambda}}(\cF)$ over the smooth locus $\cB^s \subset \cB$
parameterizing smooth curves and refer to \cite{TUY} or \cite{U} for a detailed
account. Let $\cD \subset \cB$ be the discriminant locus and let $\cS = \coprod_{i=1}^n
s_i(\cB)$ be the union of the images of the $n$ sections. We recall the exact
sequence
\begin{equation} \label{deftheta}
0  \lra \pi_* \Theta_{\cC/ \cB}(* \cS) \lra \pi_* \Theta'_\cC(* \cS)_\pi
\stackrel{\theta}{\lra} \Theta_\cB(- \mathrm{log} \ \cD) \lra 0,
\end{equation}
where $\Theta_{\cC/\cB}(* \cS)$ denotes the sheaf of vertical rational vector fields on
$\cC$ with poles only along the divisor $\cS$, and $\Theta'_\cC(*\cS)_\pi$ the sheaf
of rational vector fields on $\cC$ with poles only along the divisor $\cS$ and with constant
horizontal components along the fibers of $\pi$. There is an $\cO_\cB$-linear map
$$ p : \pi_* \Theta'_\cC (* \cS)_\pi \lra \bigoplus_{i=1}^n \cO_\cB((\xi_i))
\frac{d}{d\xi_i},$$
which associates to a vector field $\vec{\ell}$ in $\Theta'_\cC (* \cS)_\pi$ the $n$
Laurent expansions $\ell_i \frac{d}{d\xi_i}$ around the divisor $s_i(\cB)$. Abusing notation
we also write $\vec{\ell}$ for its image under $p$
$$ \vec{\ell} = (\ell_1 \frac{d}{d\xi_1}, \cdots , \ell_n \frac{d}{d\xi_n} ) \in
\bigoplus_{i=1}^n \cO_\cB((\xi_i))  \frac{d}{d\xi_i}.$$
We then define for any vector field $\vec{\ell}$ in $\Theta'_\cC (* \cS)_\pi$ the
endomorphism $D(\vec{\ell})$ of $\cO_{\cB} \otimes \cH^\dagger_{\vec{\lambda}}$ by
$$ D(\vec{\ell}) (f\otimes u) = \theta(\vec{\ell}) . f \otimes u +
\sum_{i=1}^n f \otimes (T[\ell_i] . u) $$
for $f$ a local section of $\cO_{\cB}$ and
$u \in \cH^\dagger_{\vec{\lambda}}$. Here $T[\ell_i]$ denotes the action of the
energy-momentum tensor on the $i$-th component $\cH^\dagger_{\lambda_i}$. It is shown
in \cite{TUY} that $D(\vec{\ell})$ preserves $\cV^\dagger_{l,\vec{\lambda}}(\cF)$ and
that $D(\vec{\ell})$ only depends on the image $\theta(\vec{\ell})$ up to
homothety. One therefore obtains a projective connection $\nabla$ on the sheaf
$\cV^\dagger_{l,\vec{\lambda}}(\cF)$ over $\cB^s$ given by
$$ \nabla_{\theta(\vec{\ell})} = \theta(\vec{\ell}) + T[\vec{\ell}].$$
Since this connection is projectively flat, it induces a monodromy representation
$$ \rho_{l, \vec{\lambda}}:  \pi_1( \cB^s , b) \lra \PP \GL(\cV^\dagger_{l,\vec{\lambda}}(\cF)_b) $$
for some base point $b \in \cB^s$.

\begin{rem}
{\em For a family of smooth $n$-pointed curves of genus $0$ the projective WZW connection is actually a connection
(see e.g. \cite{U} section 5.4).}
\end{rem}

\section{Monodromy of the WZW connection for a family of $4$-pointed rational curves}

In this section we review the results by Tsuchiya-Kanie \cite{TK} on the monodromy of the
WZW connection for a family of rational curves with $4$ marked points.
We consider the
universal family $\cF_4$ over $X_3$ introduced in \eqref{univfam} with the collection
$$ \vec{\lambda}^{TK} = (\varpi, \varpi, \varpi, \varpi) \in (P_l)^4. $$
The rank of the sheaf of conformal blocks 
$\cV^\dagger_{l, \vec{\lambda}^{TK} }(\cF_4)$ equals $2$ for any  $l \geq 1$, 
 see e.g. \cite{TK} Theorem 3.3. 
Moreover, as outlined in section \ref{WZWconnection}, the bundle 
$\cV^\dagger_{l, \vec{\lambda}^{TK}}(\cF_4)$ is equipped with a flat connection $\nabla$ (not only projective).

\begin{rem}
{\em It is known \cite{TK} that the differential equations satisfied by the flat sections of
$(\cV^\dagger_{l, \vec{\lambda}^{TK}}(\cF_4),  \nabla)$ coincide with the Knizhnik-Zamolodchikov equations
(see e.g. \cite{EFK}). Moreover, we will show in a forthcoming paper that the local system 
$(\cV^\dagger_{l, \vec{\lambda}^{TK}}(\cF_4),  \nabla)$
also coincides with a certain Gauss-Manin local system.}
\end{rem}

We observe that the symmetric group $\Sigma_3$ acts naturally on the base variety $X_3$. The local system
$(\cV^\dagger_{l, \vec{\lambda}^{TK}}(\cF_4),  \nabla) $ is invariant under this $\Sigma_3$-action and admits a
natural $\Sigma_3$-linearization. Thus by descent we obtain a local system
$(\overline{\cV^\dagger_{l, \vec{\lambda}^{TK}}(\cF_4)}, \overline{\nabla})$ over  $\overline{X}_3$.
Therefore, we obtain a monodromy representation

$$\widetilde{\rho}_l : B_3 = \pi_1(\overline{X}_3, \overline{p}) \longrightarrow
\GL(\overline{\cV^\dagger_{l, \vec{\lambda}^{TK}}(\cF_4)}_{\overline{p}}) = \GL(2, \CC)$$

\begin{prop}[\cite{TK} Theorem 5.2] \label{TsuchiyaKanie}
We put $q = \exp(\frac{2i\pi}{l+2})$. There exists a basis $B$ of the vector space 
$\overline{\cV^\dagger_{l, \vec{\lambda}^{TK}}(\cF_4)}_{\overline{p}}
= \cV^\dagger_{l, \vec{\lambda}^{TK}}(\cF_4)_{p}$ such that
$$\Mat_{B}(\widetilde{\rho}_l(g_1)) =  q^{ -\frac{3}{4}} \left(
\begin{array}{cc}
q & 0  \\
0 & -1
\end{array}
 \right),
\qquad
\Mat_{B}(\widetilde{\rho}_l(g_2)) = \frac{ q^{-\frac{3}{4}}}{q+1} \left(
\begin{array}{cc}
-1 & t \\
t & q^2
\end{array}
 \right), $$
with $t = \sqrt{q(1+q+q^2)}$. Note that both matrices have eigenvalues $q^{\frac{1}{4}}$ and $-q^{-\frac{3}{4}}$.
\end{prop}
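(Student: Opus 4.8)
The plan is to realise the rank-$2$ local system $(\cV^\dagger_{l,\vec{\lambda}^{TK}}(\cF_4),\nabla)$ as the monodromy of the Knizhnik--Zamolodchikov equation and then to solve that equation essentially in closed form. First I would use the genus-$0$ identification of the fibre with the space of invariants
\[
\cV^\dagger_{l,\vec{\lambda}^{TK}}(\cF_4)_p \;\cong\; \Hom_{\slfr(2)}\!\left(V_\varpi^{\otimes 4},\CC\right).
\]
Since $V_\varpi\otimes V_\varpi = V_0\oplus V_{2\varpi}$, the trivial module occurs in $V_\varpi^{\otimes 4}$ exactly twice (in $V_0\otimes V_0$ and in $V_{2\varpi}\otimes V_{2\varpi}$), so this space is $2$-dimensional, recovering the rank statement. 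Invoking the remark recorded above that the flat sections satisfy the KZ equations, the connection on the trivial bundle with this fibre becomes
\[
\nabla = d - \frac{1}{l+2}\sum_{1\le i<j\le 4}\Omega_{ij}\,d\log(z_i-z_j),
\]
where $\Omega_{ij}$ is the Casimir acting on the $i$-th and $j$-th tensor factors and $l+2 = l+h^\vee$ is the shifted level.

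Next I would exploit $\PP\GL(2)$-invariance to normalise three of the four points to $0,1,\infty$, so that flat sections become functions of the single cross-ratio $z$ on $\Mfr_{0,4}=\PP^1\setminus\{0,1,\infty\}$. The result is a rank-$2$ Fuchsian system with regular singular points at $0,1,\infty$, whose residue at each point is conjugate to $\Omega_{ij}/(l+2)$. The eigenvalues of $\Omega_{ij}$ are read off from the two summands of $V_\varpi^{\otimes 2}$: with $h^\vee=2$ one computes the Casimir value $+\tfrac12$ on the symmetric part $V_{2\varpi}=\Sym^2 V_\varpi$ and $-\tfrac32$ on the antisymmetric part $V_0=\wedge^2 V_\varpi$. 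Hence the local exponents are rational with denominator $l+2$, and the system is gauge-equivalent to a Gauss hypergeometric equation.

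The monodromy is then computed from the classical theory of that equation. The braid generators $g_1,g_2$ are the half-twists exchanging, respectively, the first two and the last two marked points, so each acts by the braiding $B_{i,i+1}=P_{i,i+1}\exp\!\big(\pi i\,\Omega_{i,i+1}/(l+2)\big)$; on the two summands this gives the eigenvalues $+q^{1/4}$ on $\Sym^2 V_\varpi$ and $-q^{-3/4}$ on $\wedge^2 V_\varpi$, with $q=\exp(2i\pi/(l+2))$. This already pins down the eigenvalues asserted in the proposition, and shows that $g_1$ and $g_2$ are conjugate with the same characteristic polynomial. Choosing the frame $B$ that diagonalises $g_1=B_{12}$, the matrix of $g_2=B_{23}$ in this frame is governed by the change of basis between the eigenframes of $B_{12}$ and $B_{23}$, i.e. the fusion (connection) matrix of the hypergeometric equation. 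Evaluating the associated ratios of Gamma functions at the rational exponents above makes them collapse to the algebraic quantity $t=\sqrt{q(1+q+q^2)}$; note that once the symmetric shape of the $g_2$-matrix is known, $t$ is in fact forced (up to sign) by requiring $g_2$ to share the characteristic polynomial of $g_1$, since $\det$ and trace then yield $t^2=q(1+q+q^2)$.

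The main obstacle is precisely this last step: determining the exact entries of the fusion matrix and matching all normalising phases (the overall factors $q^{-3/4}$, the square-root ambiguity in $t$, and the choice of flat frame) consistently between the KZ normalisation and the braid generators, together with the verification that the two matrices satisfy $g_1g_2g_1=g_2g_1g_2$. A conceptually cleaner route that avoids the Gamma-function bookkeeping is the Kohno--Drinfeld theorem, which identifies the KZ monodromy with the $R$-matrix representation of $B_3$ attached to $U_q(\slfr(2))$ at $q=\exp(2i\pi/(l+2))$: then $g_1,g_2$ act by $\check R_{12},\check R_{23}$ on $\Hom_{\slfr(2)}(V_\varpi^{\otimes 4},\CC)$, the diagonal eigenvalues $q^{1/4},-q^{-3/4}$ are (after the ribbon normalisation) exactly the $\check R$-eigenvalues on $V_{2\varpi}$ and $V_0$, and the explicit $2\times 2$ matrices drop out of the standard formula for $\check R$ on the $2$-dimensional representation.
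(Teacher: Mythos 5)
The paper offers no internal proof of this proposition: it is imported verbatim from Tsuchiya--Kanie \cite{TK}, Theorem 5.2 (cf.\ Remark \ref{rem-Ka}), so there is nothing in the text to compare your argument against except the citation itself. What you sketch is, in outline, a reconstruction of the source's own derivation --- KZ equations on $\PP^1$, reduction to a rank-two Fuchsian system in the cross-ratio, explicit monodromy --- plus the Kohno--Drinfeld alternative, and the quantitative part of it checks out: with the normalization $\Omega=\sum_a t_a\otimes t^a$ and $(\theta,\theta)=2$ one indeed gets $\Omega=\tfrac12$ on $\Sym^2V_\varpi$ and $-\tfrac32$ on $\wedge^2V_\varpi$, hence half-twist eigenvalues $q^{1/4}$ and $-q^{-3/4}$, consistent with the displayed matrices (both have $\det=-q^{-1/2}$ and trace $q^{1/4}-q^{-3/4}$). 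Two caveats. First, identifying the fibre with all of $\Hom_{\slfr(2)}(V_\varpi^{\otimes 4},\CC)$ requires the level truncation to be vacuous, i.e.\ $l\geq 2$; this is harmless for the paper's application (which excludes $l=1$) but your decomposition does not by itself recover the rank statement ``for any $l\geq 1$''. Second --- and this is the genuine gap, which you honestly flag --- the entries of $\Mat_B(\widetilde{\rho}_l(g_2))$ beyond its spectrum are not determined by eigenvalue bookkeeping: the remark that $t$ is ``forced by trace and determinant'' presupposes that $g_2$ is symmetric with diagonal proportional to $(-1,q^2)$ in the $g_1$-eigenbasis, and establishing that shape is exactly the content of the hypergeometric connection matrix (equivalently, of the explicit $\check{R}$-matrix in the invariant basis), which your sketch defers along with the verification of the braid relation. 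So the proposal is a correct plan whose decisive computation remains to be carried out; the paper sidesteps all of this by quoting \cite{TK}.
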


\begin{rem}\label{rem-Ka}
{\em These matrices have already been used in the paper \cite{AMU}.}
\end{rem}

\section{Infinite monodromy over $\Mfr_{0,4}$}

We denote by $\rho_l$ the restriction of the monodromy representation $\widetilde{\rho}_l$ to the subgroup
$\pi_1(\Mfr_{0,4}, x)$ of $B_3$ (see section 2.4.2)
$$ \rho_l : \pi_1(\Mfr_{0,4}, x) \subset B_3   \lra \GL(2, \CC). $$

\begin{prop} \label{sigmainf}
Let $\sigma \in \pi_1(\Mfr_{0,4}, x)$ be the element introduced in \eqref{defsigma}.
If $l   \not= 1,2,4$ and $8$, then the element $\rho_l(\sigma)$ has infinite
order in both $\PGL(2, \CC)$ and $\GL(2, \CC)$
\end{prop}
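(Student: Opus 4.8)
The plan is to reduce the statement to one explicit computation with the two matrices of Proposition \ref{TsuchiyaKanie}, followed by an elementary number-theoretic fact. Write $M_1 = \Mat_B(\widetilde{\rho}_l(g_1))$ and $M_2 = \Mat_B(\widetilde{\rho}_l(g_2))$. Since $\rho_l = \widetilde{\rho}_l \circ \Psi$ and $\Psi(\sigma) = \Psi(\sigma_1)^{-1}\Psi(\sigma_2) = g_2 g_1^{-2} g_2$, we have $\rho_l(\sigma) = M_2 M_1^{-2} M_2$. Both $M_1, M_2$ have determinant $-q^{-1/2}$ (their eigenvalues being $q^{1/4}$ and $-q^{-3/4}$), so $\det\rho_l(\sigma) = 1$. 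I first observe that it is enough to prove that one eigenvalue $\mu$ of $\rho_l(\sigma)$ is not a root of unity. Indeed, a finite-order element of $\GL(2,\CC)$ has eigenvalues that are roots of unity; and if $\rho_l(\sigma)$ had finite order in $\PGL(2,\CC)$, the ratio of its two eigenvalues would be a root of unity, and since $\det\rho_l(\sigma)=1$ this ratio equals $\mu^2$, forcing $\mu$ itself to be a root of unity.

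Next I would compute the trace $T = \mathrm{tr}\,\rho_l(\sigma) = \mathrm{tr}(M_1^{-2}M_2^2)$ directly from Proposition \ref{TsuchiyaKanie}. Using $t^2 = q(1+q+q^2)$, after the denominators cancel one finds
\begin{equation*}
T = (q + q^{-1})(q + q^{-1} - 1).
\end{equation*}
Writing $n = l+2$ and $c = \cos\frac{2\pi}{n} = \tfrac12(q + q^{-1})$, this reads $T = f(c)$ with $f(x) = 4x^2 - 2x$, and the eigenvalues are the roots $\mu, \mu^{-1}$ of $X^2 - TX + 1$. Since $T$ is fixed by $q \mapsto q^{-1}$ it lies in the totally real field $\QQ(\zeta_n)^+$, and its Galois conjugates are exactly the numbers $f(\cos\frac{2\pi a}{n})$ with $\gcd(a,n)=1$.

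The crux is then the contrapositive. If every eigenvalue of $\rho_l(\sigma)$ were a root of unity, then $T = \mu + \mu^{-1}$ would be a sum of two roots of unity; each Galois conjugate of $T$ is again such a sum and hence has modulus $\le 2$, and as $T$ is totally real all its conjugates lie in $[-2,2]$. So it suffices to exhibit one conjugate of $T$ exceeding $2$. An elementary analysis of $f$ on $[-1,1]$ gives $f(x) \ge -\tfrac14$ there, with $f(x) > 2$ precisely when $x < -\tfrac12$. I am therefore reduced to finding an integer $a$ with $\gcd(a,n)=1$ and $\cos\frac{2\pi a}{n} < -\tfrac12$, equivalently (replacing $a$ by $n-a$ if needed) an integer $a$ with $\gcd(a,n)=1$ and $n/3 < a < 2n/3$.

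The main obstacle, and the point where the precise exceptional set emerges, is the resulting number-theoretic lemma: such an $a$ exists exactly when $n \notin \{3,4,6,10\}$, i.e. $l \notin\{1,2,4,8\}$. To prove existence for all other $n$ I would argue as follows. If $n$ is not twice a prime, then any prime in the open interval $(n/3,2n/3)$ is automatically prime to $n$ (a prime divisor $p$ of $n$ with $p>n/3$ would force $n=2p$), and Bertrand's postulate supplies such a prime for every $n\ge 4$. If $n = 2p$ with $p$ prime, I would take the explicit witness $a = p+2$, which satisfies $n/3 < p+2 < 2n/3$ and $\gcd(p+2,2p)=1$ as soon as $p\ge 7$; the remaining cases $n = 2\cdot 2,\,2\cdot 3,\,2\cdot 5$ give the exceptions $n=4,6,10$, while the prime $n=3$ (empty interval $(1,2)$) is the last one. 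Verifying that these four values genuinely fail — in particular that $n=10$, i.e. the level $l=8$, has no integer prime to it in its middle third — is exactly what pins down the excluded levels of the theorem.
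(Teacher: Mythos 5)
Your proposal is correct and follows essentially the same route as the paper: the same trace computation (your $(q+q^{-1})(q+q^{-1}-1)$ equals the paper's $2-q-q^{-1}+q^2+q^{-2}$), the same Galois-conjugation argument bounding all conjugates of $\mu+\mu^{-1}$ by $2$, and the same final criterion, since $\cos\frac{2\pi a}{n}<-\frac12$ is exactly the paper's condition $\RR\mathrm{e}(\tilde q^2)>\RR\mathrm{e}(\tilde q)$. The only difference is that where the paper cites Masbaum for the existence of the required primitive root, you supply a short self-contained proof via Bertrand's postulate, which checks out.
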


\begin{proof}
Using the explicit form of the monodromy representation $\rho_l$ given in Proposition \ref{TsuchiyaKanie}
we compute the matrix associated to $\Psi(\sigma) = \Psi(\sigma_1^{-1} \sigma_2) = g_2 g_1^{-2} g_2$
$$\Mat_{B}(\widetilde{\rho}_l(\Psi(\sigma))) = \frac{1}{(q+1)^2} \left(
\begin{array}{cc}
q^{-2} + t^2 & t(q^2 - q^{-2})  \\
t(q^2 - q^{-2}) & t^2q^{-2} + q^4
\end{array}
 \right).$$
This matrix has determinant $1$ and trace
$2 - q -q^{-1} + q^2 + q^{-2}$. Hence the matrix has finite order if and only if there exists
a primitive root of unity $\lambda$ such that
$$\lambda + \lambda^{-1} = 2 - q -q^{-1}  +q^2 + q^{-2}.$$
In \cite{Mas} it is shown that this can only happen if $l = 1,2,4$ or $8$: using the 
transitive action of $\Gal(\bar\QQ/\QQ)$ on primitive roots of unity, one gets that, if such a $\lambda$ exists for
$q = \exp(\frac{2i\pi}{l+2})$, then  for
{\em any} primitive $(l+2)$-th root $\tilde{q}$ there exists a primitive root $\tilde{\lambda}$ such that
$$ \tilde{\lambda} + \tilde{\lambda}^{-1} = 2 - \tilde{q} - \tilde{q}^{-1}  +
\tilde{q}^2 +  \tilde{q}^{-2}.$$
In particular, we have the inequality $| 1 - \RR \mathrm{e}(\tilde{q}) + \RR \mathrm{e} (\tilde{q}^2) | \leq 1$ for any
primitive $(l+2)$-th root $\tilde{q}$. But for $l \not= 1,2,4$ and $8$, one can always find a
primitive $(l+2)$-th root $\tilde{q}$ such that $\RR \mathrm{e} (\tilde{q}^2) > \RR \mathrm{e} (\tilde{q})$ --- for the
explicit root $\tilde{q}$ see \cite{Mas}.

Finally, since $\rho_l(\sigma)$ has trivial determinant, its class in $\PGL(2,\CC)$ will also have infinite order.
\end{proof}

\bigskip

\begin{rem}
{\em The same computation shows that the element $\rho_l(\sigma_1 \sigma_2^{-1}) \in  \GL(2, \CC)$ also has
infinite order if $l  \not= 1,2,4$ and $8$. This implies that the orientation chosen for both loops $\sigma_1$ and
$\sigma_2$ around $0$ and $1$ is irrelevant. On the other hand, it is immediately seen that the elements
$\rho_l(\sigma_1), \rho_l(\sigma_2)$ and $\rho_l(\sigma_1 \sigma_2)$ have finite order for
any level $l$.}
\end{rem}

\bigskip

\begin{prop} \label{finiteprojgroup}
In the four cases $l = 1,2,4$ and $8$, the image
$\rho_l(\pi_1(\Mfr_{0,4}, x))$ in the projective linear group $\PP \GL(2, \CC)$ is finite and
isomorphic to the groups given in table
$$
\begin{tabular}{|c||c|c|c|c|}
        \hline
            l & $1$ &  $2$ &  $4$ &  $8$  \\
            \hline
            $\rho_l(\pi_1(\Mfr_{0,4}, x))$ & $\mu_3$ & $\mu_2 \times \mu_2$ & $A_4$ & $A_5$  \\
     \hline
\end{tabular}
$$
Here $A_n$ denotes the alternating group on $n$ letters.
\end{prop}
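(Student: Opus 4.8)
The plan is to determine each finite group $\rho_l(\pi_1(\Mfr_{0,4},x))$ explicitly by computing the images of the two generators $\sigma_1,\sigma_2$ in $\PP\GL(2,\CC)$ and identifying the subgroup they generate. Since $\pi_1(\Mfr_{0,4},x)=\langle \sigma_1,\sigma_2\rangle$ is free of rank $2$ (the relation $\sigma_3\sigma_2\sigma_1=1$ merely eliminates $\sigma_3$), its image is generated by $\rho_l(\sigma_1)$ and $\rho_l(\sigma_2)$. Using the formulas $\Psi(\sigma_1)=g_2g_1^2g_2^{-1}$ and $\Psi(\sigma_2)=g_2^2$ together with the explicit matrices for $\widetilde\rho_l(g_1),\widetilde\rho_l(g_2)$ from Proposition \ref{TsuchiyaKanie}, I would write down the two generating matrices for each of the four exceptional levels. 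The remark preceding this proposition already records that $\rho_l(\sigma_1)$, $\rho_l(\sigma_2)$ and $\rho_l(\sigma_1\sigma_2)$ all have finite order for every $l$, so at each of $l=1,2,4,8$ the three images are torsion elements of $\PP\GL(2,\CC)$, i.e.\ rotations of the Riemann sphere.

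First I would substitute $q=\exp(\tfrac{2i\pi}{l+2})$ for $l=1,2,4,8$, so $q$ is a primitive $3$rd, $4$th, $6$th and $10$th root of unity respectively, and compute $t=\sqrt{q(1+q+q^2)}$ in each case. Then I would form the projective classes of the two generators and compute their orders as well as the order of their product $\rho_l(\sigma_1\sigma_2)$; these three orders $(a,b,c)$ pin down the triangle-type presentation $\langle x,y \mid x^a=y^b=(xy)^c=1\rangle$ (at least as a quotient). The standard classification of finite subgroups of $\PP\GL(2,\CC)\cong\mathrm{SO}(3)$ — cyclic, dihedral, $A_4$, $\Sigma_4$, $A_5$ — then lets me match the computed orders against this short list. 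For $l=1$ the trace computation from the proof of Proposition \ref{sigmainf} shows $\rho_1(\sigma)$ is trivial (the trace equals $2-q-q^{-1}+q^2+q^{-2}$ collapses when $q$ is a cube root of unity), forcing the projective group to be the cyclic group $\mu_3$ generated by the common image of $\sigma_1,\sigma_2$. For $l=2,4,8$ the relevant orders come out as the signatures $(2,2,2)$, $(2,3,3)$ and $(2,3,5)$, which are exactly the signatures of the Klein four-group $\mu_2\times\mu_2$, the tetrahedral group $A_4$, and the icosahedral group $A_5$.

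The main obstacle will be certifying that the group is exactly the claimed one rather than a proper quotient or a larger group sharing the same generator orders: knowing the orders of $x$, $y$ and $xy$ constrains but does not by itself determine the group. The cleanest way to close this gap is to exploit the rigidity of the polyhedral groups — each of $\mu_2\times\mu_2$, $A_4$ and $A_5$ is the unique finite subgroup of $\mathrm{SO}(3)$ realizing its signature among the platonic list, so it suffices to exhibit the generating rotations concretely (e.g.\ by computing their rotation axes and angles from the matrices) and check that the group they generate is finite and acts irreducibly with no invariant point or pair of points. Concretely I would verify the defining relation of the target polyhedral group holds for the computed matrices up to scalar, and that the image is neither cyclic nor dihedral by checking that no line in $\CC^2$ and no pair of lines is preserved. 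Once irreducibility and the correct relations are confirmed, finiteness of the list of subgroups of $\PP\GL(2,\CC)$ forces the identification, completing the table.
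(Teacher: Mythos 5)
Your overall strategy (compute the projective images of $\sigma_1,\sigma_2$ and identify the group they generate via polyhedral presentations and the classification of finite subgroups of $\PP \GL(2,\CC)$) is the same as the paper's, and your treatment of $l=1$ and $l=2$ is correct. But the pivotal numerical claim for $l=4$ and $l=8$ is wrong, and the error is fatal to the argument as you set it up. Writing $m_i=\rho_l(\sigma_i)$, one computes from Proposition \ref{TsuchiyaKanie} that $\Mat_B(\widetilde{\rho}_l(g_2g_1^2g_2))$ is diagonal, projectively equal to $\mathrm{diag}(1,q^2)$; hence $\ord(m_1)=\ord(m_2)=\ord(m_1m_2)=\ord(q^2)$, which is $3$ for $l=4$ and $5$ for $l=8$. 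The triples of orders of $(m_1,\,m_2,\,m_1m_2)$ are therefore $(3,3,3)$ and $(5,5,5)$, not $(2,3,3)$ and $(2,3,5)$ as you assert. These are Euclidean and hyperbolic signatures: the triangle groups $\langle x,y\mid x^3=y^3=(xy)^3=1\rangle$ and $\langle x,y\mid x^5=y^5=(xy)^5=1\rangle$ are infinite, so verifying these relations gives no upper bound on the image and in particular no proof of finiteness. Your fallback --- matching against the list of finite subgroups of $\mathrm{SO}(3)$ after ``checking that the group they generate is finite'' --- is circular, since establishing finiteness is exactly the point at issue and you supply no mechanism for it.

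The paper closes this gap by changing generators so that a \emph{spherical} triangle-group presentation does hold. For $l=4$ one checks $\ord(m_1^{-1}m_2)=2$ (its matrix has determinant $1$ and trace $2-q-q^{-1}+q^2+q^{-2}=0$ when $q=e^{i\pi/3}$), and then $a=m_1$, $b=m_1^{-1}m_2$ satisfy $a^3=b^2=(ab)^3=1$ with $ab=m_2$; the image is thus a quotient of $A_4$ containing elements of orders $2$ and $3$, hence is $A_4$. For $l=8$ one needs the further computation that $m_1m_2^{-1}m_1$ (conjugate, up to scalar, to the image of $g_1^{-2}g_2^2g_1^{-2}$, whose matrix has trace zero) has order $2$; then $a=m_1m_2^{-1}m_1$, $b=m_1^{-1}m_2$ satisfy $a^2=b^3=(ab)^5=1$ with $ab=m_1$ and $ab^2=m_2$, so the image is a nontrivial quotient of the simple group $A_5$, hence is $A_5$. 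To salvage your write-up you must introduce these auxiliary elements and carry out the corresponding order computations; the orders of $\rho_l(\sigma_1)$, $\rho_l(\sigma_2)$ and $\rho_l(\sigma_1\sigma_2)$ alone do not determine the group and do not even prove it is finite.
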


\begin{proof}
We denote by $m_1, m_2 \in \PP \GL(2, \CC)$ the elements defined by the matrices
$\Mat_B(\rho_l(\sigma_1))$ and
$\Mat_B(\rho_l(\sigma_2))$ and denote by $\ord(m_i)$ their order in the group $\PP \GL(2, \CC)$.
In the first two cases one immediately checks the relations $m_1 = m_2$, $\ord(m_1) = \ord(m_2) = 3$ (for $l = 1$) and
$\ord(m_1) = \ord(m_2) = \ord(m_1m_2) = 2$ (for $l =2$).

In the case $l  = 4$ we recall that the alternating group $A_4$ has the following presentation by generators and
relations
$$ A_4 = \langle a,b \ | \ a^3 = b^2 = (ab)^3 = 1 \rangle. $$
Using the formulae of Proposition \ref{TsuchiyaKanie} and \ref{sigmainf} we check that $\ord(m_1) = \ord(m_2) = 3$ and
$\ord(m_1^{-1}m_2) = 2$, so that $a=m_1$ and $b = m_1^{-1}m_2$ generate the group $A_4$.

In the case $l = 8$ we recall that the alternating group $A_5$ has the following presentation by generators and
relations
$$ A_5 = \langle a,b \ | \ a^2 = b^3 = (ab)^5 = 1 \rangle. $$
Using the formulae of Proposition \ref{TsuchiyaKanie} and \ref{sigmainf} we check that $\ord(m_1) = \ord(m_2) = 5$ and
$\ord(m_1^{-1}m_2) = 3$. Moreover a straightforward computation shows that the element $m_1^{-1} m_2 m_1^{-1}$ is
(up to a scalar) conjugate to the matrix
$$\Mat_{B}(\widetilde{\rho}_l(g_1^{-2} g_2^2 g_1^{-2})) = * \left(
\begin{array}{cc}
q^{-4}(1 + t^2) & t(1 - q^{-2})  \\
t(1 - q^{-2}) & t^2 + q^4
\end{array}
 \right),$$
which has trace zero. Note that $t^2 = q+q^2+q^3$ and $q^{-4} = -q$. Hence $\ord(m_1^{-1} m_2 m_1^{-1} ) =
\ord(m_1 m_2^{-1} m_1) = 2$. Therefore if we put $a = m_1 m_2^{-1} m_1$ and $b = m_1^{-1} m_2$, we have
$ab = m_1$ and $ab^2 = m_2$, so that $\ord(a) = 2$, $\ord(b)= 3$, and $\ord(ab) = 5$, i.e. $a,b$ generate the
group $A_5$.
\end{proof}

\begin{cor}
In the four cases $l= 1,2,4$ and $8$, the image $\widetilde{\rho}_l(B_3)$ in $\GL(2, \CC)$ is finite.
\end{cor}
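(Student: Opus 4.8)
The plan is to verify the two standard conditions that make a subgroup of $\GL(2,\CC)$ finite: that its image in $\PGL(2,\CC)$ is finite, and that its intersection with the scalars $\CC^*\cdot\Id$ is finite. Write $\Gamma = \widetilde{\rho}_l(B_3) \subset \GL(2,\CC)$ and let $H \subset \PGL(2,\CC)$ be its image. The scalar part is the easy half. Since $g_1$ and $g_2$ are conjugate in $B_3$ (indeed $g_2 = (g_1g_2g_1)\,g_1\,(g_1g_2g_1)^{-1}$ by the braid relation), the character $\det\circ\widetilde{\rho}_l$ factors through the abelianization $B_3^{\mathrm{ab}}\cong\ZZ$ and sends a generator to $\det\widetilde{\rho}_l(g_1) = q^{1/4}\cdot(-q^{-3/4}) = -q^{-1/2}$, a root of unity by Proposition \ref{TsuchiyaKanie}. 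Hence $\det(\Gamma)$ is finite cyclic, say of order $d$, and any scalar $\lambda\,\Id\in\Gamma$ satisfies $\lambda^2 = \det(\lambda\,\Id)\in\det(\Gamma)$, so $\lambda^{2d}=1$; thus the scalar subgroup of $\Gamma$ is finite. Granting that $H$ is finite, the extension $1 \to \Gamma\cap\CC^*\Id \to \Gamma \to H \to 1$ has finite kernel and cokernel, so $\Gamma$ is finite.

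It remains to prove that $H$ is finite, and here I would exploit the presentation $B_3 = \langle g_1,g_2 \mid g_1g_2g_1 = g_2g_1g_2\rangle$. The center of $B_3$ is generated by the full twist $(g_1g_2)^3 = (g_1g_2g_1)^2$; computing its image directly from the matrices of Proposition \ref{TsuchiyaKanie} (or, for $l\ge 2$, invoking Schur's lemma once one checks $t\ne 0$, so that the two matrices share no eigenvector) shows that $\widetilde{\rho}_l((g_1g_2)^3)$ is scalar. Consequently $H$ factors through $B_3/Z(B_3) \cong \mathrm{PSL}(2,\ZZ) = \langle a,b \mid a^2 = b^3 = 1\rangle$, where $a$ is the image of $g_1g_2g_1$ and $b$ that of $g_1g_2$. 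By Proposition \ref{TsuchiyaKanie} both $\widetilde{\rho}_l(g_i)$ have eigenvalue ratio $-q$, so $\overline{g}_1$ and $\overline{g}_2$ each have order $n := \ord(-q)$ in $\PGL(2,\CC)$; since $\overline{g}_2 = ab^{-1}$, this exhibits $H$ as a quotient of $\langle a,b \mid a^2=b^3=(ab^{-1})^n=1\rangle \cong \Delta(2,3,n)$, the $(2,3,n)$-triangle group.

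The finiteness of $H$ then reduces to an arithmetic check on $n=\ord(-q)$ with $q=\exp(2i\pi/(l+2))$. For $l=2,4,8$ one finds $n=4,3,5$, so $\Delta(2,3,n)$ is a spherical triangle group ($S_4$, $A_4$, $A_5$) and is finite; hence $H$ is finite, and comparison with Proposition \ref{finiteprojgroup} even identifies it. The main obstacle is the case $l=1$, where $n=6$ and $\Delta(2,3,6)$ is the infinite Euclidean triangle group, making the triangle-group bound useless. I would treat this case directly: for $l=1$ the number $q$ is a primitive cube root of unity, so $t=\sqrt{q(1+q+q^2)}=0$ and both matrices of Proposition \ref{TsuchiyaKanie} are diagonal; thus $\Gamma$ is abelian, generated by two elements of finite order, and so is finite outright. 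Combining the two halves yields the finiteness of $\widetilde{\rho}_l(B_3)$ in all four cases.
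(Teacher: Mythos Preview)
Your proof is correct and takes a genuinely different route from the paper's. The paper climbs the subgroup tower $\pi_1(\Mfr_{0,4},x)\lhd P_3\lhd B_3$: it lifts the projective finiteness of Proposition~\ref{finiteprojgroup} to $\GL(2,\CC)$ via the same determinant argument you give, then uses that $P_3$ is generated by this normal subgroup together with the finite-order element $\widetilde{\rho}_l(g_1^2)$, and finally that $B_3/P_3=\Sigma_3$ is finite. You instead pass to the quotient $B_3/Z(B_3)\cong\mathrm{PSL}(2,\ZZ)$ and bound the projective image by a $(2,3,n)$ triangle group with $n=\ord(-q)$, which is spherical for $l=2,4,8$; the degenerate case $l=1$ (where $n=6$ gives the Euclidean triangle group) is handled directly since $t=0$ makes the representation diagonal. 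Your argument is self-contained in that it does not invoke the case analysis of Proposition~\ref{finiteprojgroup}, and it supplies a conceptual explanation via the spherical/Euclidean trichotomy; the paper's argument is uniform across the four levels but leans on that prior proposition. One minor caveat: your aside that comparison with Proposition~\ref{finiteprojgroup} ``even identifies'' $H$ conflates the projective image of $B_3$ with that of its subgroup $\pi_1(\Mfr_{0,4},x)$ --- for $l=2$, say, the latter is $\mu_2\times\mu_2$ while your triangle-group bound is $S_4$ --- but this does not affect the finiteness claim.
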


\begin{proof}
First, we observe that the image $\rho_l(\pi_1(\Mfr_{0,4}, x))$ in $\GL(2, \CC)$ is finite. In fact, by Proposition
\ref{finiteprojgroup} its image in $\PP \GL(2, \CC)$ is finite and its intersection
$\rho_l(\pi_1(\Mfr_{0,4}, x)) \cap
\CC^* \mathrm{Id}$ with the center of $\GL(2, \CC)$ is also finite. The latter follows from the fact that the determinant
$\det \Mat_{B}(\widetilde{\rho}_l(g_i)) =  - q^{- \frac{1}{2}}$ has finite order in $\CC^*$.

Secondly, we recall that $P_3$ is generated by the normal subgroup $\pi_1(\Mfr_{0,4}, x)$ and by the element $g_1^2$. Since
$\widetilde{\rho}_l(g_1^2)$ has finite order and since $B_3/P_3 = \Sigma_3$ is finite, we obtain that $\widetilde{\rho}_l(B_3)$ is a finite
subgroup.
\end{proof}

\bigskip

In the proof of the main theorem we will need the following corollary of Proposition \ref{sigmainf}. 
We consider the following compact subset of $\CC$
\begin{equation} \label{baseB}
\cB =  D \setminus (\Delta_{-1} \cup \Delta_0 \cup \Delta_1) 
\end{equation}
where $D \subset \CC$ is the closed disc centered at $0$ with radius $2$ and $\Delta_z \subset \CC$ denotes the open disc centered at $z$ with 
very small radius. We choose as base point $b = i \in \cB$.
Let $\xi \in \pi_1(\cB, b)$ be the loop going once around the two points $-1$ and $0$.  
Let $$ \cF =  (\PP^1 \times \cB \ra \cB; s_0 , s_1, s_{-1} , s_u , s_{-u}) $$
be the family of $5$-pointed rational curves, where the 5 sections $s_0 , s_1, s_{-1} , s_u , s_{-u}$ map
$u \in \cB$ to $0,1,-1, u$ and $-u$ respectively. 

\begin{cor} \label{fivepoints}
For $l \not= 1,2,4$ and $8$, the image of the 
loop $\xi \in \pi_1(\cB, b)$ under the monodromy representation 
$$ \rho_l : \pi_1(\cB, b) \lra \GL(\cV^\dagger_{l, 0,\vec{\lambda}^{TK}} (\cF)) $$
has infinite order. 
\end{cor}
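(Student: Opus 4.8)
The plan is to deduce this from Proposition \ref{sigmainf}: I would first strip off the weight-$0$ section by propagation of vacua, then recognise the resulting four-pointed family as a pullback of the Tsuchiya--Kanie family over $\Mfr_{0,4}$, and finally identify the image of the loop $\xi$ with a conjugate of an element already known to have infinite order.

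First I would apply propagation of vacua (\cite{TUY}, \cite{U}): since the weight carried by the \emph{constant} section $s_0$ is $0$, there is a canonical isomorphism
$$ \cV^\dagger_{l, 0, \vec{\lambda}^{TK}}(\cF) \;\cong\; \cV^\dagger_{l, \vec{\lambda}^{TK}}(\cF'), $$
where $\cF' = (\PP^1 \times \cB \ra \cB; s_1, s_{-1}, s_u, s_{-u})$ is obtained from $\cF$ by forgetting $s_0$. This isomorphism is flat for the WZW connection (which is an honest connection in genus $0$), so it intertwines the two monodromy representations, and it is enough to treat $\cF'$. As all four remaining weights equal $\varpi$ and conformal blocks depend on the pointed curve only up to isomorphism, the local system of $\cF'$ is the pullback along the cross-ratio map
$$ \phi : \cB \lra \Mfr_{0,4} = \PP^1_\CC \setminus \{0,1,\infty\}, \qquad u \lms -\frac{(1-u)^2}{4u} $$
(which packages the simultaneous motion of both points $u$ and $-u$) of the descended local system $(\overline{\cV^\dagger_{l, \vec{\lambda}^{TK}}(\cF_4)}, \overline{\nabla})$ of Section 4. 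Hence the monodromy of $\xi$ equals $\rho_l(\phi_*(\xi))$, with $\rho_l$ the representation of $\pi_1(\Mfr_{0,4}) = \langle \sigma_1, \sigma_2, \sigma_3 \mid \sigma_3\sigma_2\sigma_1 = 1\rangle$ appearing in Proposition \ref{sigmainf}, and the problem reduces to computing $\phi_*(\xi)$.

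Next I would analyse $\phi$ as a degree-two cover of $\PP^1$. One checks $\phi(1)=0$, $\phi(-1)=1$, $\phi(0)=\infty$, that the two branch points are $u=1$ and $u=-1$ (lying over $z=0$ and $z=1$), and that the fibre over $z=\infty$ consists of the two \emph{unramified} points $u=0$ and $u=\infty$. Therefore a small loop around the ramification point $u=-1$ is sent to $\sigma_2^{2}$, while the loop $\xi_0$ around $u=0$ is sent to $\sigma_3$; the orientations are pinned down by the winding numbers $n_0=-1,\ n_1=+1$ of $\phi\circ\xi$ about $0$ and $1$. Writing $\xi \simeq \xi_{-1}\xi_0$ and using $\sigma_3 = (\sigma_2\sigma_1)^{-1} = \sigma_1^{-1}\sigma_2^{-1}$, this yields
$$ \phi_*(\xi) = \sigma_2^{2}\,\sigma_3 = \sigma_2^{2}\,\sigma_1^{-1}\sigma_2^{-1}, \qquad \sigma_2^{-1}\,\phi_*(\xi)\,\sigma_2 = \sigma_2\sigma_1^{-1} = (\sigma_1\sigma_2^{-1})^{-1}. $$

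To conclude I would invoke the remark following Proposition \ref{sigmainf}, by which $\rho_l(\sigma_1\sigma_2^{-1})$ has infinite order in $\GL(2,\CC)$ for $l \not= 1,2,4$ and $8$; its inverse and all its conjugates then also have infinite order, so $\rho_l(\phi_*(\xi))$, and with it the monodromy of $\xi$, has infinite order. I expect the delicate point to be the precise identification of $\phi_*(\xi)$ in $\pi_1(\Mfr_{0,4})$: one must get the ramification indices and the orientations of the $\sigma_i$ exactly right, since the nearby word $\sigma_2^{2}\sigma_3^{-1}$ (abelianising to $\sigma_1\sigma_2^{3}$) is \emph{not} controlled by Proposition \ref{sigmainf}. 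Checking that propagation of vacua is genuinely flat here, so that infinite order is obtained in $\GL(2,\CC)$ and not merely in $\PGL(2,\CC)$, is the other point that requires care.
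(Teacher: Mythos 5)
Your argument is correct and is essentially the paper's own proof: propagation of vacua to drop the weight-$0$ point, the cross-ratio realizing $\cB$ as an \'etale double cover of $\Mfr_{0,4}$ branched only over $u=\pm 1$, and the resulting computation of the image of $\xi$ in $\pi_1(\Mfr_{0,4},x)$. The only (immaterial) differences are that you use the cross-ratio $1-t$ instead of $t=\tfrac{(u+1)^2}{4u}$, so you land on a conjugate of $(\sigma_1\sigma_2^{-1})^{-1}$ and invoke the remark following Proposition \ref{sigmainf}, whereas the paper obtains $\Phi(\xi)=\sigma_2^{-1}\sigma_1=\sigma^{-1}$ and cites the proposition directly.
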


\begin{proof}
Since propagation of vacua is a flat isomorphism (see e.g. \cite{Lo} Proposition 22) we can drop the 
point $0$ which is marked with the zero weight. Thus it suffices to show the statement for the same family 
with the $4$ sections $s_1, s_{-1} , s_u , s_{-u}$. The cross-ratio of the 4 points $1,-1,u,-u$ equals
$$ t = \frac{(u+1)^2}{4u} = \frac{1}{2} + \frac{1}{4}(u + u^{-1}). $$ 
We also introduce the $4$-pointed family
$$\cF' = (\PP^1 \times  \cB \ra \cB ; s_0, s_1, s_\infty , s_t), $$ 
where the section $s_t$ maps $u$ to the cross-ratio $t$ and we observe that there exists
an automorphism $\alpha : \PP^1 \times \cB \ra  \PP^1 \times \cB$ over $\cB$ (which can be made 
explicit) mapping the $4$ sections  $s_1, s_{-1}, s_u , s_{-u}$ to the $4$ sections
$s_0, s_1, s_\infty , s_t$. Moreover the automorphism $\alpha$ induces an isomorphism
between the two local systems $(\cV^\dagger_{l, \vec{\lambda}^{TK}}(\cF) , \nabla)$
and $(\cV^\dagger_{l, \vec{\lambda}^{TK}}(\cF') , \nabla)$ over $\cB$. We now consider the
map induced by the cross-ratio
$$ \Psi:  \cB \longrightarrow \PP^1 \setminus \{ 0,1, \infty \} = \Mfr_{0,4}, \qquad u \mapsto t.$$
One easily checks that the extension $\overline{\Psi}$ of $\Psi$ to $\PP^1$ gives a double cover of $\PP^1$ ramified
over $0 = \overline{\Psi}(-1)$ and $1 = \overline{\Psi}(1)$. Note that $\overline{\Psi}(0) = 
\overline{\Psi}(\infty) = \infty$. Hence $\Psi$ is an \'etale double cover over its image. The map $\Psi$
induces a map, denoted by $\Phi$, between fundamental groups
$$ \Phi: \pi_1(\cB,i) \lra \pi_1(\PP^1 \setminus \{ 0,1, \infty \} , \frac{1}{2}).$$
An elementary computations shows that $\Phi(\xi_1) = \sigma_1^2$, $\Phi(\xi_2) = \sigma_2^2$, and
$\Phi(\xi_3) = \sigma_2^{-1} \sigma_1^{-1}$, where $\xi_1$, $\xi_2$ and $\xi_3$ denote the loops
in $\cB$ going once around the points $-1$, $1$ and $0$ respectively. We recall from section 2.3 that $\sigma_1$ and
$\sigma_2$ denote the loops in $\Mfr_{0,4}$ around the points $0$ and $1$. All orientations of the loops
are the same. Hence if we denote $\xi = \xi_3 \xi_1$, the loop in $\cB$ going around the two points $-1$ and
$0$, then $\Phi(\xi) = \sigma_2^{-1} \sigma_1 = \sigma^{-1}$. Thus $\xi$ has infinite order  by Proposition
\ref{sigmainf}.
\end{proof}

\section{Infinite monodromy for higher genus}

\subsection{Deformation of families of pointed nodal curves}

In this section we explicitly describe a deformation of two families of rational nodal curves into smooth curves. These
deformations will be used in the proof of the main theorem.

\subsubsection{A family of rational curves}

We consider the family of rational curves $p : \cC \ra \AAA^1$ parameterized by the affine line $\AAA^1$
and given by the equation
$$ \cC = \mathrm{Zeros}(f) \subset \PP^2 \times \AAA^1 \qquad f = xy - \tau z^2, $$
where $(x:y:z)$ are homogeneous coordinates on the projective plane and $\tau$ is a coordinate on $\AAA^1$. We denote by
$\cC_\tau$ the fiber over $\tau \in \AAA^1$. For $\tau \not= 0$ the curve $\cC_\tau$ is a smooth conic and $\cC_0 =
L_0 \cup L_1$ is the union of two projective lines given by the equations $L_0 = \mathrm{Zeros}(y)$ and 
$L_1 = \mathrm{Zeros}(x)$. For $\tau \not= 0$ we can parameterize the smooth conic $\cC_\tau$ in the 
following way
$$ \Phi_\tau : \PP^1 \lra \cC_\tau \subset \PP^2, \qquad 
(\alpha: \beta) \mapsto (\beta^2 : \tau \alpha^2 : \alpha \beta). $$
Note that for $\tau = 0$ this morphism also gives a parametrization of the line $L_0$.

\bigskip

Let $m \geq 2$ be an integer. We define $2m+1$ sections $s_1, \ldots , s_{2m}, s_\infty$ of the family 
$p: \cC \ra \AAA^1$ parameterized by an open subset of $\AAA^1$ with coordinate $u$. We put
$$ s_1(u, \tau) = (1:\tau : 1) \qquad  s_2(u, \tau) = (1:\tau : -1), $$
$$ s_3(u, \tau) = (1:\tau u^2 : u) \qquad  s_4(u, \tau) = (1: \tau u^2 : -u), $$
and for $j = 3, \ldots , m$
$$ s_{2j-1}(u,\tau) = (\tau : j^2: j) \qquad s_{2j}(u,\tau) = (\tau  : j^2 : -j).$$
Finally we put 
$$ s_\infty(u,\tau) = (0:1:0).$$
We observe that for $\tau \not= 0$ the $2m+1$ points $s_1(u,\tau), \ldots , s_{2m}(u, \tau), s_\infty(u, \tau)$
correspond to the following points in $\PP^1$ via the morphism $\Phi_\tau$:
$$ 1,-1, u, -u, 3\tau^{-1}, -3 \tau^{-1}, \ldots, m \tau^{-1}, - m \tau^{-1}, \infty. $$
For $\tau = 0$ the points $s_1(u,0), s_2(u,0), s_3(u,0), s_4(u,0) \in L_0$ have coordinates $1,-1,u,-u$
and the points $s_5(u,0), \ldots , s_{2m}(u,0), s_\infty(u,0) \in L_1$ have coordinates $3, - 3, \ldots ,
m, - m, \infty$, in particular they do not depend on $u$. We consider the open
subset $\Omega = \{ \tau \ : \  |\tau| < \frac{1}{2} \} \subset \AAA^1$ and the compact subset 
$\cB$ as defined in \eqref{baseB},  and define the 
family 
$$ \cF_{2m+1}^{rat} = (\pi = id \times p : \cB \times \cC_{|\Omega} \ra \cB \times \Omega; s_1 , s_2, \ldots ,
s_{2m}, s_\infty) $$
of $2m+1$-pointed rational curves.

\subsubsection{A family of hyperelliptic curves}

Let $g \geq 2$ be an integer and let $\alpha$ be a complex number satisfying $|\alpha| > 1$.
We consider the family $p: \cC \ra \AAA^1 \times \AAA^1$ of curves parameterized by two complex numbers 
$(u, \tau) \in \AAA^1 \times \AAA^1$ and such that the fiber $\cC_{(u, \tau)}$ is the double cover of $\PP^1$
ramified over the $2g+2$ points:
$$ 0, \infty, u^2 + \tau , u^2 - \tau, 1 + \tau, 1 - \tau, (3\alpha)^2 + \tau, (3\alpha)^2 - \tau, \ldots ,
(g \alpha)^2 + \tau, (g \alpha)^2 - \tau.  $$
We assume that these points are distinct.
We denote the projection $pr: \cC_{(u, \tau)} \ra \PP^1$.
The family of curves $\cC$ can be constructed by taking the closure in $\PP^2 \times \AAA^1 \times \AAA^1$ of the affine
curve in $\AAA^2 \times \AAA^1 \times \AAA^1$ over $\AAA^1 \times \AAA^1$ defined by the equation
\begin{equation} \label{hyperelliptic}
y^2 =  x (x -1 + \tau) (x -1 - \tau) (x - u^2 + \tau )(x - u^2 - \tau) \prod_{j= 3}^g 
(x - (j\alpha)^2 + \tau )(x - (j \alpha)^2 - \tau) 
\end{equation}
and by blowing up $g$ times the singular point at $\infty$. 

\bigskip

We notice that for any $\tau \not= 0$ with $|\tau|$ sufficiently small and for $u$ varying in a
Zariski open subset $U_\tau$ of $\AAA^1$ the curves 
$\cC_{(u, \tau)}$ are smooth hyperelliptic curves of genus $g$. For $\tau = 0$ and 
$u^2 \not=  0,1, (3\alpha)^2 , \ldots , (g \alpha)^2$ 
the curve $\cC_{(u, \tau)}$ 
is a rational nodal curve with $g$ nodes lying over the points 
$1, u^2, (3\alpha)^2 , \ldots , (g \alpha)^2$ of $\PP^1$.
The normalization map $\eta: \PP^1 \ra \cC_{(u, 0)}$ is explicitly given over $\AAA^1 \subset \PP^1$ by
the expressions
$$ \eta: \AAA^1 \ra \AAA^2, \qquad t \mapsto (x(t),y(t)) = ( t^2, t(t^2 -1)(t^2 - u^2) \prod_{j=3}^g 
(t^2 - (j \alpha)^2 ). $$
This shows that the pre-images by $\eta$ of the $g$ nodes are 
$$1,-1, u,-u, 3\alpha, - 3\alpha, \ldots , g \alpha, - g \alpha. $$

\bigskip

We put $s_\infty(u,\tau) = \infty \in \cC_{(u, \tau)}$ for any $(u, \tau)$ and $\cB$ as defined in \eqref{baseB}. Let $\Omega = \{ \tau : |\tau| < \frac{1}{2} \}$.  We define the family 
$$ \cF_g^{hyp} := ( \pi : \cC_{| \cB \times \Omega} \ra \cB \times \Omega , s_\infty) $$
of $1$-pointed hyperelliptic curves.

\bigskip

\subsection{The sewing procedure}

We will briefly sketch the construction of the sewing map and give some of its
properties (for the details see \cite{TUY} or \cite{U}).

\bigskip

We consider a flat family
$$ \cF = ( \pi: \cC \ra \cB \times \Omega; s_1, \ldots , s_n) $$
of $n$-pointed connected projective curves parameterized by  $\cB \times \Omega$, where $\cB$
is a complex manifold and $\Omega \subset \AAA^1$ is an open subset of the complex affine line $\AAA^1$
containing the origin $0$. We assume that the family $\cF$ satisfies the following conditions:

\begin{enumerate}
\item the curve $\cC_{(b,\tau)}$ is smooth if $\tau \not= 0$.
\item the curve $\cC_{(b,0)}$ has exactly one node.
\end{enumerate}

We also introduce the family $\widetilde{\cF}$ of $n+2$-pointed curves associated to $\cF$
$$ \widetilde{\cF} = ( \widetilde{\pi}: \widetilde{\cC} \ra \cB ; s_1, \ldots , s_{n+2}) $$
which desingularizes the family of nodal curves $\cF_{|\cB \times \{ 0 \}}$. Here $s_{n+1}(b)$ and
$s_{n+2}(b)$ are the two points of $\widetilde{\cC}_b$ lying over the node of $\cC_{(b,0)}$.

\begin{rem}
{\em An example of a family $\cF$  satisfying the above conditions is given in
section 6.1.1.}
\end{rem}

For any dominant weight $\mu$ the
Virasoro operator $L_0$ induces a
decomposition of the representation space $\cH_\mu$ into a direct sum
of eigenspaces $\cH_\mu(d)$ for the eigenvalue $d + \Delta_\mu$ of $L_0$, where
$\Delta_\mu \in \QQ$ is the
trace anomaly and $d \in \NN$. We recall that there exists a unique (up to a
scalar) bilinear pairing 
$$(. | .) : \cH_\mu \times \cH_{\mu^\dagger} \ra \CC \qquad \text{such that} \qquad
(X(n) u | v) + (u|X(-n)v) = 0$$ 
for any $X \in \slfr(2)$, $n \in \ZZ$, $u \in \cH_\mu$,
$v \in \cH_{\mu^\dagger}$ and $(. |. )$ is zero on
$\cH_\mu(d) \times \cH_{\mu^\dagger}(d')$ if $d \not= d'$. We choose a basis $\{ v_1(d), \ldots,
v_{m_d}(d) \}$ of $\cH_\mu(d)$ and let $\{ v^1(d), \ldots,
v^{m_d}(d) \}$ be its dual basis of $\cH_{\mu^\dagger}(d)$ with respect to the above
bilinear form. Then the element
$$ \gamma_d = \sum_{i=1}^{m_d} v_i(d) \otimes v^i(d) \in \cH_\mu (d) \otimes
\cH_{\mu^\dagger} (d) \subset \cH_\mu \otimes
\cH_{\mu^\dagger}$$
does not depend on the basis. We recall that 
$\cV^\dagger_{l,\vec{\lambda}, \mu, \mu^\dagger}(\widetilde{\cF})$ is a locally free
$\cO_{\cB}$-module. Given a section $\psi \in \cV^\dagger_{l,\vec{\lambda}, \mu, \mu^\dagger}(\widetilde{\cF})$ 
we define an $\cH^\dagger_{\vec{\lambda}}$-valued power series
$\widetilde{\psi} \in 
\cH^\dagger_{\vec{\lambda}}[[\tau]] \otimes \cO_{\cB}$ as follows. For any non-negative integer $d$
the inclusion $\cH_{\vec{\lambda}} \hookrightarrow \cH_{\vec{\lambda}, \mu, \mu^\dagger}$,
$v \mapsto v \otimes \gamma(d)$ induces a dual projection 
\begin{equation} \label{projectionpid}
\pi_d : 
\cH^\dagger_{\vec{\lambda}, \mu, \mu^\dagger} \lra \cH^\dagger_{\vec{\lambda}}.
\end{equation}
We denote by $\psi_d$ the image $\pi_d(\psi) \in \cH^\dagger_{\vec{\lambda}} \otimes \cO_{\cB}$. 
We then define
$$ \widetilde{\psi}  =  \sum_{d=0}^\infty  \psi_d \tau^d \in \cH_{\vec{\lambda}}[[\tau]] \otimes
\cO_{\cB}.$$
It is shown in \cite{TUY} that 
$$\widetilde{\psi} \in \cV^\dagger_{l, \vec{\lambda}} (\cF) \otimes_{\cO_{\cB \times \Omega}} \widehat{\cO},$$
where $\widehat{\cO}$ denotes the structure sheaf of the completion of $\cB \times \Omega$ along the
divisor $\cB \times \{ 0 \}$. Note that $\widehat{\cO} = \cO_{\cB}[[\tau]]$. Therefore 
we obtain for any $\mu \in P_l$ and any $\vec{\lambda} \in (P_l)^n$ an $\cO_{\cB \times \Omega}$-linear map
$$ s_\mu : \cV^\dagger_{l,\vec{\lambda},\mu,  \mu^\dagger}(\widetilde{\cF}) \otimes_{\cO_{\cB}} 
\cO_{\cB \times \Omega} \lra
\cV^\dagger_{l, \vec{\lambda}} (\cF) \otimes_{\cO_{\cB \times \Omega}} \widehat{\cO} , \qquad \psi \mapsto s_\mu(\psi) =  
\widetilde{\psi}, $$
called the {\em sewing map}. We denote $\Omega^0 = \Omega \setminus \{ 0 \}$.

\bigskip

We recall that the sheaf $\cV^\dagger_{l,\vec{\lambda},\mu,  \mu^\dagger}(\widetilde{\cF})$
over $\cB$, as well as its pull-back to the product $\cB \times \Omega^0$ under the first
projection, is equipped with the WZW-connection (see section 3.2). On the other hand,
the restriction of the sheaf $\cV^\dagger_{l, \vec{\lambda}} (\cF)$ to $\cB \times \Omega^0$,
which is the open subset of $\cB \times \Omega$ parameterizing smooth curves, is also equipped
with the WZW-connection. The main result of this section (Theorem \ref{sewingflat}) says that the sewing
map $s_\mu$ is projectively flat for both connections. We first need to recall the following

\begin{thm}[\cite{TUY} Theorem 6.2.2] \label{sewingisflat}
For any section $\psi \in \cV^\dagger_{l,\vec{\lambda}, \mu, \mu^\dagger}(\widetilde{\cF})$
the multi-valued formal power series $\widehat{\psi} = \tau^{\Delta_\mu} \widetilde{\psi}$
has the following properties :
\begin{enumerate}
\item it satisfies the relation
$$ \nabla_{\tau \frac{d}{d\tau}} (\widehat{\psi}) = 0 \qquad \mathrm{mod} \
\cO_{\cB \times \Omega} \widehat{\psi}.$$
\item for any $b \in \cB$, the power series $\widetilde{\psi}_b$ converges. 
\item if $\cB$ is compact, there
exists a non-zero positive real number $r$ such that the power series $\widetilde{\psi}$ defines a
holomorphic section of $\cV^\dagger_{l, \vec{\lambda}} (\cF)$ over $\cB \times D_r$,
where $D_r \subset \Omega$ is the open disc centered at $0$ with radius $r$.
\end{enumerate}
\end{thm}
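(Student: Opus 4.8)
The plan is to treat the three assertions separately: the first is an algebraic identity coming from the Virasoro/Ward-identity structure, while the second and third are analytic consequences of the regular singular nature of the WZW connection in the $\tau$-direction.

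For (1), I would first fix a concrete lift $\vec{\ell}$ of the vector field $\tau \frac{d}{d\tau}$ to the total space of $\cF$. Near the node the smoothed family is modelled on $\{ xy = \tau \}$, where $x$ and $y$ are the coordinates at the two branch points $s_{n+1}$ and $s_{n+2}$; one checks that the Euler field $\frac{1}{2}(x\frac{\partial}{\partial x} + y\frac{\partial}{\partial y})$ projects onto $\tau\frac{d}{d\tau}$, so its Laurent expansions at $s_{n+1}$ and $s_{n+2}$ are $\frac{1}{2} x\frac{d}{dx}$ and $\frac{1}{2} y\frac{d}{dy}$. Applying $\nabla_{\tau d/d\tau} = \tau\frac{d}{d\tau} + T[\vec{\ell}]$ to $\widetilde{\psi} = \sum_d \pi_d(\psi)\tau^d$, the first term contributes the factor $d$ on $\pi_d(\psi)$. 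For the second term I would use that $\psi$ is a conformal block on $\widetilde{\cF}$: its invariance under the global meromorphic energy-momentum action lets me trade the operator $T[\vec{\ell}]$ at the $n$ points $s_1, \ldots, s_n$ for the contribution of $\vec{\ell}$ at the two neck points, where $T[\frac{1}{2} x\frac{d}{dx}]$ and $T[\frac{1}{2} y\frac{d}{dy}]$ are the Virasoro modes $\frac{1}{2} L_0^{(n+1)}$ and $\frac{1}{2}L_0^{(n+2)}$. Their sum acts on $\gamma_d \in \cH_\mu(d)\otimes \cH_{\mu^\dagger}(d)$ by the eigenvalue $d + \Delta_\mu$ (using $\Delta_{\mu^\dagger} = \Delta_\mu$). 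A short computation, in which the sign arises because the connection acts on the dual space, then gives $T[\vec{\ell}]\pi_d(\psi) = -(d+\Delta_\mu)\pi_d(\psi)$ modulo a scalar, the scalar coming from the central-charge term that makes the Virasoro Ward identity hold only projectively. Combining, one finds $\nabla_{\tau d/d\tau}\widetilde{\psi} \equiv -\Delta_\mu\,\widetilde{\psi} \pmod{\cO_{\cB\times\Omega}\widetilde{\psi}}$, and the prefactor $\tau^{\Delta_\mu}$ in $\widehat{\psi}$ is designed exactly to absorb this shift, yielding $\nabla_{\tau d/d\tau}\widehat{\psi} \equiv 0 \pmod{\cO_{\cB\times\Omega}\widehat{\psi}}$.

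For (2), I would fix $b \in \cB$ and restrict to the punctured disc $\{b\}\times\Omega^0$. Using the fact (recalled above from \cite{TUY}) that $\widetilde{\psi}$ is a formal section of the bundle $\cV^\dagger_{l,\vec{\lambda}}(\cF)$, and that the WZW connection has at worst a logarithmic pole along $\tau = 0$, the relation from (1) says precisely that $\widehat{\psi}$ restricted to this disc is a formal, projectively horizontal section of a finite rank connection with a regular singular point at the origin. I would then invoke classical Fuchs theory: a formal solution of a linear system $\tau\frac{dY}{d\tau} = A(\tau)Y$ with $A$ holomorphic at $0$ is automatically convergent. This gives a positive radius $r_b$ of convergence of $\widetilde{\psi}_b$. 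For (3) I would make this estimate uniform in $b$. In a holomorphic frame the connection matrix $A(b,\tau) = \sum_{k\geq 0}A_k(b)\tau^k$ depends holomorphically on $(b,\tau)$, and matching powers of $\tau$ in the horizontality relation produces a recursion of the shape $(d + \Delta_\mu + A_0(b) - c_0(b))\psi_d = -\sum_{k\geq 1}(A_k(b) - c_k(b))\psi_{d-k}$. For $d$ large the indicial operator on the left is invertible with inverse of norm $O(1/d)$, while on a compact $\cB$ the operators $A_k(b)$ and $c_k(b)$ are uniformly bounded with geometrically decaying norms; a standard majorant argument then yields a uniform geometric bound $\|\psi_d(b)\| \leq C\rho^{-d}$, hence a common radius $r$. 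Thus $\widetilde{\psi}$ converges uniformly on $\cB\times D_r$ and defines a holomorphic section there.

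The main obstacle is step (1): the delicate point is the local computation at the node identifying the smoothing deformation with the Virasoro operator $L_0$ and correctly bookkeeping the central charge, which is simultaneously responsible for the scalar ambiguity modulo $\cO_{\cB\times\Omega}\widehat{\psi}$ and for the normalizing factor $\tau^{\Delta_\mu}$. Once this is in place, assertions (2) and (3) are routine consequences of regular-singularity theory, the only real labor being the uniform majorant estimate needed in (3).
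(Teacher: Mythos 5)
Your proposal is essentially correct, but it does more work than the paper and, on the one part the paper actually proves, takes a genuinely different route. The paper treats (1) and (2) as black boxes imported from \cite{TUY} Theorem 6.2.2; your Virasoro computation at the node for (1) --- the Euler field lifting $\tau\frac{d}{d\tau}$, the identification of its action at the neck with $L_0$, the eigenvalue $d+\Delta_\mu$ on $\gamma_d$ and the $\tau^{\Delta_\mu}$ normalization --- is in substance a reconstruction of the TUY argument (compare Lemma \ref{vectorfield} and the paper's remark correcting the sign of $\theta(\vec{\ell})$), so it is redundant here though not wrong. For (3), where the paper supplies its own proof, it avoids your connection-matrix recursion and majorant estimates entirely: it expands $\widetilde{\psi}$ near a point $a\in\cB$ as a power series in the $m+1$ variables $(u_1,\dots,u_m,\tau)$, uses the pointwise convergence (2) at a nearby point $b$ with all coordinates nonzero, and then invokes Abel's lemma for power series in several complex variables (\cite{O} Proposition 1.2) to get a uniform radius of convergence $r_a$ on a whole polydisc $\Delta_a$ around $a$; compactness of $\cB$ then yields the global $r$. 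Your Fuchsian route for (2)--(3) is legitimate but silently assumes two further inputs from \cite{TUY}: that $\cV^\dagger_{l,\vec{\lambda}}(\cF)$ extends locally freely across $\tau=0$ with the connection acquiring only a logarithmic pole there, and that the scalars $c_k(b)$ in the projective horizontality relation are holomorphic and uniformly bounded on $\cB$. What your approach buys is a self-contained, quantitative bound $\|\psi_d(b)\|\leq C\rho^{-d}$ independent of (2); what the paper's approach buys is brevity, needing only the pointwise statement (2) plus elementary several-variable function theory.
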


\begin{proof}
Only part (3) is not proved in \cite{TUY} Theorem 6.2.2. Consider a point $a \in \cB$. We choose holomorphic
coordinates $u_1, \ldots, u_m$ centered at the point $a \in \cB$. Locally around the point $a \in \cB$ the
section $\widetilde{\psi}$ can be expanded as a $\cH^\dagger_{\vec{\lambda}}$-valued power series in the $m+1$
variables $u_1, u_2, \ldots, u_m, \tau$. Given a second point $b \not= a$ with coordinates $b= (b_1, \ldots, b_m)$ with 
$b_i \not= 0$, we know by part (2) that $\widetilde{\psi}_b$ converges if $|\tau| \leq \rho $ for some real $\rho$. By the general theory
of functions in several complex variables (see e.g. \cite{O} Proposition 1.2) we deduce that  $\widetilde{\psi}_c$
converges for $|\tau| < \rho$ and for any $c = (c_1, \ldots , c_m)$ such that $|c_i| < |b_i|$. Therefore, there
exists for any $a \in \cB$ a polydisc $\Delta_a$ around $a$ and a real number $r_a$ such that the radius
of convergence of the series $\widetilde{\psi}_c$ for any $c \in \Delta_a$ is at least $r_a$. By considering the
covering of $\cB$ by the polydiscs $\Delta_a$ and by the fact that $\cB$ is compact, we then obtain the desired
non-zero real number $r$.
\end{proof}

\begin{rem}
{\em We note that the statement given in \cite{TUY} Theorem 6.2.2 says that there exists
a vector field $\vec{\ell}$ over the family of curves $\mathcal{C}$ such that
$$ \left(-\tau \frac{d}{d\tau} + T[\vec{\ell}]  \right) . \widehat{\psi} = 0 \qquad
\mathrm{mod} \ \cO_{\cB \times \Omega} \widehat{\psi},$$
which is equivalent to the above statement using the property $\theta(\vec{\ell}) =
-\tau \frac{d}{d\tau}$. This last equality is actually proved in \cite{TUY} Corollary 6.1.4, but
there is a sign error. The correct formula of \cite{TUY} Corollary 6.1.4 is
$\theta(\vec{\ell}) =  -\tau \frac{d}{d\tau}$, which is obtained by writing the
$1$-cocycle $\theta_{12}(u,\tau) = \tilde{\ell}'_{u,\tau |U_2} - \tilde{\ell}_{u,\tau |U_1}$.}
\end{rem}

\begin{rem}
{\em By making the base change $\nu^j = \tau$, where $j$ is the denominator of the trace
anomaly $\Delta_\mu$, we obtain a section
$\widehat{\psi} \in \cV^\dagger_{l, \vec{\lambda}} (\cF) \otimes_{\cO_{\cB \times \Omega}} \cO_\cB[[\nu]]$ 
satisfying $\nabla_{\nu \frac{d}{d\nu}} (\widehat{\psi}) = 0 \ \mathrm{mod} \
\cO_{\cB \times \Omega}  \widehat{\psi}$.}
\end{rem}

The next result says that the sewing map is projectively flat.

\begin{thm} \label{sewingflat}
For any $\mu \in P_l$ and any $\vec{\lambda} \in (P_l)^n$ the restriction of the sewing map $s_\mu$ to the open subset 
$\cB \times \Omega^0$
$$ s_\mu : \cV^\dagger_{l,\vec{\lambda},\mu,  \mu^\dagger}(\widetilde{\cF}) \otimes_{\cO_{\cB}} 
\cO_{\cB \times \Omega^0} \lra
\cV^\dagger_{l, \vec{\lambda}} (\cF) \otimes_{\cO_{\cB \times \Omega^0}} \widehat{\cO}
$$
is projectively flat for the WZW connections on both sheaves of conformal blocks.
\end{thm}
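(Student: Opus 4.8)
The plan is to establish projective flatness one direction at a time, using the decomposition of the tangent sheaf of $\cB\times\Omega^0$ into the vector fields tangent to the $\cB$-factor and the line spanned by $\tau\frac{d}{d\tau}$. For the $\tau$-direction the result is essentially already contained in Theorem \ref{sewingisflat}. Indeed, the source sheaf $\cV^\dagger_{l,\vec\lambda,\mu,\mu^\dagger}(\widetilde\cF)\otimes_{\cO_\cB}\cO_{\cB\times\Omega^0}$ is pulled back from $\cB$, so its WZW connection is flat along $\tau$ and $\nabla_{\tau\frac{d}{d\tau}}\psi=0$ for every $\psi$ coming from $\cB$. On the other hand, part (1) of Theorem \ref{sewingisflat} asserts that $\widehat\psi=\tau^{\Delta_\mu}\widetilde\psi$ satisfies $\nabla_{\tau\frac{d}{d\tau}}(\widehat\psi)\equiv 0 \pmod{\cO_{\cB\times\Omega}\widehat\psi}$; since projective parallelism ignores the scalar factor $\tau^{\Delta_\mu}$, this says exactly that $s_\mu(\psi)=\widetilde\psi$ is projectively parallel along $\tau\frac{d}{d\tau}$ whenever $\psi$ is. Hence $s_\mu$ intertwines the two connections in the $\tau$-direction up to a scalar. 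Moreover part (3) of that theorem makes $\widetilde\psi$ an honest holomorphic section over $\cB\times(D_r\setminus\{0\})$, so the formal identities below, a priori valid over the completion $\widehat\cO$, hold as genuine relations between holomorphic sections.

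It remains to treat a vector field $\vec v$ tangent to $\cB$. Recall from Section \ref{WZWconnection} that the connection acts by $\nabla_{\vec v}=\vec v+T[\vec\ell]$, where $\vec\ell$ is any lift of $\vec v$ to a meromorphic vector field on the total space with the prescribed poles at the marked sections. I would first choose such a lift $\vec\ell$ on the desingularized family $\widetilde\cF$, with controlled poles at the $n+2$ points $s_1,\dots,s_{n+2}$, the last two lying over the node, and then transport it through the local sewing coordinates $xy=\tau$ to obtain a lift computing the WZW connection of $\cF$ as well. The core of the argument is to compare, coefficient by coefficient in $\tau$, the action of $T[\vec\ell]$ before and after sewing. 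Writing $\widetilde\psi=\sum_d\psi_d\tau^d$ with $\psi_d=\pi_d(\psi)$, one tracks how the Laurent modes of $\vec\ell$ at the two preimages $s_{n+1},s_{n+2}$ of the node act on the two legs of the canonical element $\gamma_d=\sum_i v_i(d)\otimes v^i(d)$. Using the invariance $(X(n)u|v)+(u|X(-n)v)=0$ of the bilinear pairing, these two actions are mutually adjoint and cancel, apart from grading-shifting terms sending $\cH_\mu(d)\otimes\cH_{\mu^\dagger}(d)$ to the neighbouring eigenspaces; after weighting by $\tau^d$ these shifts are absorbed into powers of $\tau$ and reassemble into the action of $\vec\ell$ on the nearby smooth curve. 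The upshot should be the identity
$$ \nabla^\cF_{\vec v}\,\widetilde\psi \;=\; \widetilde{\big(\nabla^{\widetilde\cF}_{\vec v}\,\psi\big)} \pmod{(\text{scalar})\cdot\widetilde\psi}, $$
which is precisely projective flatness of $s_\mu$ along $\vec v$.

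I expect the node-local analysis to be the principal obstacle, for two intertwined reasons. First, a lift $\vec\ell$ of a $\cB$-direction will in general acquire a nonzero $\tau$-component, since moving in $\cB$ deforms the smoothing; thus the $\cB$-direction computation is genuinely coupled to the $\tau$-direction already handled, and separating them requires the explicit form of $\theta(\vec\ell)$ together with the grading shift by the trace anomaly $\Delta_\mu$. Second, and more delicately, one must check that the residual central and $L_0$-contributions produced when the modes of $\vec\ell$ cross the node assemble into a single scalar depending only on $\vec v$ and not on $\psi$; this is the Ward-type identity underlying the entire sewing construction, and its careful bookkeeping---especially the role of the central extension of $\widehat{\slfr(2)}$---is the technical heart. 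Once it is in place, combining it with the $\tau$-direction computation and the holomorphicity from Theorem \ref{sewingisflat}(3) yields projective flatness of $s_\mu$ over all of $\cB\times\Omega^0$.
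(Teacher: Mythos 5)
Your overall skeleton matches the paper's: split the tangent directions of $\cB\times\Omega^0$ into $\tau\frac{d}{d\tau}$ and vector fields from $\cB$, dispose of the $\tau$-direction by Theorem \ref{sewingisflat}(1) (noting that sections pulled back from $\cB$ are parallel in $\tau$), and use Theorem \ref{sewingisflat}(3) to upgrade formal identities to holomorphic ones. That part is fine.

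The gap is in the $\cB$-direction. You defer the entire node-local cancellation to an unspecified ``Ward-type identity,'' anticipating contributions from all Laurent modes of $\vec\ell$ at the two branches, grading shifts between the $\cH_\mu(d)$, and central-extension terms --- and you yourself flag this bookkeeping as the unresolved technical heart. The paper's proof avoids all of it by a specific choice of lift (Lemma \ref{vectorfield}, modelled on Ueno's Lemma 5.3.1): one constructs $\vec\ell$ on the desingularized family $\widetilde{\cC}_{|U}$ over $\cB$ alone, with local form $z\frac{d}{dz}+\partial$ near $s_{n+1}$ and $-w\frac{d}{dw}+\partial$ near $s_{n+2}$. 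With this choice the only node contribution to $T[\vec\ell]$ is $L_0^{(n+1)}-L_0^{(n+2)}$; projecting the flatness equation by $\pi_d$ gives $\pi_d(L_0^{(n+1)}\psi)=(\Delta_\mu+d)\psi_d$ and $\pi_d(L_0^{(n+2)}\psi)=(\Delta_{\mu^\dagger}+d)\psi_d$, which cancel exactly because $\Delta_\mu=\Delta_{\mu^\dagger}$ --- no mode-crossing, no residual scalar to identify, no central terms. Moreover, a general lift would \emph{not} ``reassemble into the action of $\vec\ell$ on the nearby smooth curve'': under the identification $zw=\tau$ one has $z\frac{d}{dz}=-w\frac{d}{dw}$, so precisely this choice of local parts glues to a vector field $\vec m$ on the smooth fiber $\cC_{(b,\tau)}$ whose Laurent expansions at $s_1,\dots,s_n$ agree with those of $\vec\ell$ and are independent of $\tau$; an arbitrary $a(z)\frac{d}{dz}$ transforms to a $\tau$-dependent expression and does not descend. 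This also dissolves your worry about the lift acquiring a $\tau$-component: the lift lives over $\cB$, not over $\cB\times\Omega$, and the gluing to $\vec m$ is performed fiberwise at fixed $\tau$. Without Lemma \ref{vectorfield} (or an equivalent normalization of the lift at the node), the cancellation you are counting on does not go through as stated.
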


\begin{proof}
We need to check that $\nabla_{D}(\widetilde{\psi}) = 0 \  \mathrm{mod} \
\cO_{\cB \times \Omega^0} \widetilde{\psi}$ if $\nabla_D(\psi) = 0 \  \mathrm{mod} \
\cO_{\cB \times \Omega^0} \psi$ for any vector field $D$ over $\cB \times \Omega^0$. By 
$\cO_{\cB \times \Omega^0}$-linearity of the connection, it suffices to check the following two points:
\begin{enumerate}
\item $\nabla_{\frac{d}{d\tau}}(\widetilde{\psi}) = 0  \ \mathrm{mod} \ \cO_{\cB \times \Omega^0} \widetilde{\psi}$ for any
section $\psi$.
\item $\nabla_{\partial}(\widetilde{\psi}) = 0 \  \mathrm{mod} \  \cO_{\cB \times \Omega^0} \widetilde{\psi}$ if 
$\nabla_{\partial}(\psi) = 0 \  \mathrm{mod} \ \cO_{\cB} \psi$ for any vector field $\partial$ on $\cB$.
\end{enumerate}

Part (1) is an immediate corollary of the previous Theorem \ref{sewingisflat} (1). Note that
$\nabla_{\frac{d}{d\tau}}(\psi) = 0$ for any section $\psi$ over $\cB$.

We now prove part (2). We start with a lemma, which is an analogue of \cite{U} Lemma 5.3.1. 

\begin{lem} \label{vectorfield}
Let $b \in \cB$ and let $\partial$ be a vector field in some neighbourhood $U$ of $b$. If we choose $U \subset \cB$
sufficiently small, then there exist local coordinates $(u_1, \ldots, u_m, z)$ (resp. $(u_1, \ldots, u_m, w)$) of a neighbourhood
$X$ (resp. $Y$) of $s_{n+1}(U) \subset \widetilde{\cC}_{|U}$ (resp.  $s_{n+2}(U) \subset \widetilde{\cC}_{|U}$) and a vector
field $\vec{\ell}$ over $\widetilde{\cC}_{|U}$, which is constant along the fibers
$$ \vec{\ell} \in H^0(\widetilde{\cC}_{|U}, 
\Theta'_{\widetilde{\cC}}(* \sum_{i=1}^n s_i(\cB))_{\widetilde{\pi}})$$
and which satisfy the following conditions :
\begin{enumerate}
\item the sections $s_{n+1}$ and $s_{n+2}$ are given by the mappings
$$ s_{n+1} : (u_1, \ldots , u_m) \mapsto (u_1, \ldots , u_m,0)= (u_1, \ldots , u_m,z) $$
$$ s_{n+2} : (u_1, \ldots , u_m) \mapsto (u_1, \ldots , u_m,0)= (u_1, \ldots , u_m,w) $$
\item $\vec{\ell}_{|X} = z \frac{d}{dz} + \partial$, $\vec{\ell}_{|Y} = -w \frac{d}{dw} + \partial$. In particular,  $\theta(\vec{\ell}) = \partial$, i.e.
$\vec{\ell}$ projects onto the vector field $\partial$. Here $\theta$ denotes the projection on the horizontal component, see
\eqref{deftheta}.
\end{enumerate}
\end{lem}

\begin{proof}
The proof follows the lines of the proof of \cite{U} Lemma 5.3.1. 

For a small neighbourhood $U$ of $b$, we choose $(u_1, \ldots, u_m, x)$ and $(u_1, \ldots , u_m, y)$  local coordinates in
$\widetilde{\pi}^{-1}(U)$ satisfying condition (1) of the Lemma. We denote by $\widetilde{\pi}' : \widetilde{\cC}' \lra \cB$ the
family of nodal curves parameterized by $\cB$ obtained from the family $\widetilde{\cC}$ by identifying the two divisors
$s_{n+1}(\cB)$ and $s_{n+2}(\cB)$. Note that we have a sequence of maps over $\cB$
$$ \widetilde{\cC} \map{\nu} \widetilde{\cC}' \lra \cC_{|\cB \times \{ 0 \}}.$$
By choosing $U$ small enough, we can lift the vector field $\partial$ over $U \subset \cB$ to a vector field $\vec{\ell}$
over $\widetilde{\cC}'_{|U}$ which is constant along the fibers of $\widetilde{\cC}'_{|U} \ra U$ and has poles only at 
$\cS_{|U}$, i.e. lies in $\Theta'_{\widetilde{\cC}'}(*\cS)_{\widetilde{\pi}'}$. The inclusion
$$ \Theta'_{\widetilde{\cC}'}(*\cS)_{\widetilde{\pi}'} \hookrightarrow \nu_*   
\Theta'_{\widetilde{\cC}} (*\cS - s_{n+1}(\cB) - s_{n+2}(\cB))_{\widetilde{\pi}} $$
allows us to see $\vec{\ell}$ as a vector field over $\widetilde{\cC}_{|U}$ having the property
$$\frac{da}{dx} (u,0) + \frac{db}{dy} (u,0) = 0, $$
where the functions $a(u,x)$ and $b(u,y)$ are defined by the expressions of the restriction of
$\vec{\ell}$ to the neighboorhoods $X$ and $Y$
$$ \vec{\ell}_{|X} = a(u,x) \frac{d}{dx} \ \text{and} \  \vec{\ell}_{|Y} = b(u,y) \frac{d}{dy}. $$
Note that $a(u,0) = 0$ and $b(u,0) = 0$ for any $u \in U$. The rest of the proof then goes as in 
\cite{U} Lemma 5.3.1 or \cite{TUY} Lemma 6.1.2.
 \end{proof}

Let $b \in \cB$ and let $\partial$ be a vector field in some neighbourhood $U$ of $b$. Taking $U$ sufficiently small, we can
lift the projective connection $\nabla$ on the sheaf 
$\cV^\dagger_{l,\vec{\lambda},\mu,  \mu^\dagger}(\widetilde{\cF})$ over $U$ to a connection. We consider the vector field
$\vec{\ell}$ constructed in Lemma \ref{vectorfield}. Then for a local section $\psi$ over $U$ the equation
$$\nabla_{\partial}(\psi) = 0 \  \mathrm{mod} \ \cO_{U} \psi $$
is equivalent to the equation
\begin{equation} \label{flatsection}
 \partial \psi + T[\vec{\ell}] \psi + a \psi = 0 
\end{equation}
for some local section $a$ of $\cO_U$. We will take as local coordinates $\xi_{n+1} = z$ and $\xi_{n+2} = w$ 
around the divisors $s_{n+1}(U)$ and $s_{n+2}(U)$, as introduced
in Lemma \ref{vectorfield}. Then the image of $\vec{\ell}$ under $p$ can be written 
$$ \vec{\ell} = (\ell_1 \frac{d}{d\xi_1}, \cdots , \ell_n \frac{d}{d\xi_n}, \xi_{n+1} \frac{d}{d\xi_{n+1}} ,
 - \xi_{n+2} \frac{d}{d\xi_{n+2}} )  $$
Since $T[ \xi_{i} \frac{d}{d\xi_{i}}] = L_0$ acting on the $i$-th component of the tensor product, we obtain 
the following decomposition
$$T[\vec{\ell}] = \sum_{i=1}^n T[\ell_i] + L_0^{(n+1)} - L_0^{(n+2)},$$
where the exponent $(i)$ of the Virasoro operator $L_0$ denotes an action on the $i$-th component. 

For any non-negative integer $d$ we then project equation \eqref{flatsection} via the map $\pi_d$ defined
in \eqref{projectionpid} into $\cH^\dagger_{\vec{\lambda}}$, which leads to
$$ \partial \psi_d + \sum_{i=1}^n T[\ell_i] \psi_d + \pi_d ( L_0^{(n+1)} \psi )  -
\pi_d ( L_0^{(n+2)} \psi ) + a \psi_d = 0. $$
We have the equalities $\pi_d ( L_0^{(n+1)} \psi ) = (\Delta_\mu + d) \psi_d$ and
$\pi_d ( L_0^{(n+2)} \psi ) = (\Delta_{\mu^\dagger} + d) \psi_d$. Hence both terms cancel,
since $\Delta_\mu = \Delta_{\mu^\dagger}$. This leads to the equations for any $d$
\begin{equation} \label{flatsection2}
\partial \psi_d + \sum_{i=1}^n T[\ell_i] \psi_d  + a \psi_d = 0. 
\end{equation}
Multiplying \eqref{flatsection2} with $\tau^d$ and summing over $d$, we obtain the equation
\begin{equation} \label{flatsectiontilde}
\partial \widetilde{\psi} + \sum_{i=1}^n T[\ell_i] \widetilde{\psi}  + a \widetilde{\psi} = 0.
\end{equation}
Note that $\partial (\psi_d \tau^d) = (\partial \psi_d) \tau^d$, since the vector 
field $\partial$ comes from $\cB$.

The vector field $\vec{\ell}$ over $\widetilde{\cC}_{|U}$ determines a vector field $\vec{m}$ over the family 
of smooth curves $\cC_{|U \times \Omega^0}$ as follows. We fix a point $b \in \cB$ and a non-zero
complex number $\tau$ with $|\tau| < 1$. The smooth curve $\cC_{(b, \tau)}$ is obtained from
the curve $\widetilde{\cC}_b$ by removing the two closed discs $D_{n+1}$ and $D_{n+2}$ centered 
at $s_{n+1}(b)$ and $s_{n+2}(b)$ with radius $|\tau|$, and by identifying in the open curve
$\widetilde{\cC}_b \setminus (D_{n+1} \cup D_{n+2})$ the two annuli
$$ A_{n+1} = \{ p \in \widetilde{\cC}_b \ : \ |\tau| < |z(p)| < 1 \} \ \text{and} \
 A_{n+2} = \{ p \in \widetilde{\cC}_b \ : \ |\tau| < |w(p)| < 1 \} $$
according to the relation
$$ zw = \tau.$$
Under this identification, we see that the two restrictions of vector fields 
$\vec{\ell}_{|\{ b \} \times A_{n+1}}$ and  $\vec{\ell}_{|\{ b \} \times A_{n+2}}$ 
correspond (since $z \frac{d}{dz} = - w \frac{d}{dw}$) and thus define a vector field
$\vec{m}$ over $\cC_{(b, \tau)}$, which has poles only at the $n$ points $s_1(b), \ldots ,
s_n(b)$. Moreover the Laurent expansion of $\vec{m}$ at $s_1(b), \ldots ,
s_n(b)$ coincide with the Laurent expansion of $\vec{\ell}$. For the construction in a 
family, see \cite{U} section 5.3. Hence $\theta(\vec{m}) = \partial$ and $p(\vec{m}) = 
(\ell_1 \frac{d}{d\xi_1}, \cdots , \ell_n \frac{d}{d\xi_n})$. So equation \eqref{flatsectiontilde}
can be written as
$$ \nabla_\partial \widetilde{\psi} = \partial \widetilde{\psi}  + T[\vec{m}] \widetilde{\psi} = 0  
 \  \mathrm{mod} \  \cO_{\cB \times \Omega^0} \widetilde{\psi}.$$
The last equation means that $\widetilde{\psi}$ is a projectively flat section for the WZW
connection.
\end{proof}

From now on we assume that $\cB$ is compact.
Since by Theorem \ref{sewingisflat} (3) the formal power series $\widetilde{\psi}$ determines a holomorphic section
over $\cB \times D_r$ we can choose a complex number $\tau_0 \not= 0$ with $|\tau_0| < r$ and evaluate $\widetilde{\psi}$ at 
$\tau_0$. This gives a section $\widetilde{\psi}(\tau_0)$ of the conformal block 
$\cV^\dagger_{l, \vec{\lambda}} (\cF_{\tau_0}) = \cV^\dagger_{l, \vec{\lambda}} (\cF)_{| \cB \times \{ \tau_0 \}}$.

\bigskip

Moreover, using the factorization rules (see e.g. \cite{TUY} Theorem 6.2.6 or
 \cite{U} Theorem 4.4.9) we obtain
by summing over all dominant weights $\mu \in P_l$ an $\cO_{\cB}$-linear isomorphism 
$$ \oplus s_\mu (\tau_0) :
\bigoplus_{\mu \in P_l} \cV^\dagger_{l,\vec{\lambda},\mu,  \mu^\dagger}(\widetilde{\cF})  \stackrel{\sim}{\longrightarrow}  \cV^\dagger_{l, \vec{\lambda}} (\cF_{\tau_0}), $$
which is projectively flat for the WZW connections on both sheaves over $\cB$ by Theorem \ref{sewingflat}.
We fix a base point $b \in \cB$, which gives
a direct sum decomposition 

\begin{equation} \label{factrules}
\bigoplus_{\mu \in P_l} \cV^\dagger_{l,\vec{\lambda},\mu ,  \mu^\dagger}(\widetilde{\cF})_b
\stackrel{\sim}{\longrightarrow}  \cV^\dagger_{l, \vec{\lambda}}
(\cF_{\tau_0})_b.
\end{equation}

We denote by $\DD$ the subgroup of $\PGL(\cV^\dagger_{l, \vec{\lambda}}(\cF_{\tau_0})_b)$ consisting of projective
linear maps preserving the direct sum decomposition \eqref{factrules} and by $p_\mu : \DD \lra
 \PGL( \cV^\dagger_{l, \vec{\lambda}, \mu, \mu^\dagger}(\widetilde{\cF})_b)$ the
 projection onto the summand corresponding to $\mu \in P_l$.

\bigskip

The next proposition is an immediate consequence of the fact that the maps $s_\mu(\tau_0)$ are projectively flat.

\begin{prop} \label{imagemono}
With the above notation we have  for
any $\mu \in P_l$ and any $\vec{\lambda} \in (P_l)^n$
\begin{enumerate}
\item the monodromy representation of the sheaf of conformal blocks $\cV^\dagger_{l, \vec{\lambda}}(\cF_{\tau_0})$ over
$\cB \times \{ \tau_0 \}$ takes values in the subgroup $\DD$, i.e.,
$$ \rho_{l, \vec{\lambda}} : \pi_1(\cB,b) \lra \DD \subset \PGL ( \cV^\dagger_{l, \vec{\lambda}}(\cF_{\tau_0})_b). $$
\item we have a commutative diagram
$$
\begin{CD}
\pi_1(\cB,b) @>\rho_{l, \vec{\lambda}}>> \DD \\
@VV = V @VVp_{\mu}V \\
\pi_1(\cB,b) @>\rho_{l, \vec{\lambda}, \mu, \mu^\dagger}>> \PGL( \cV^\dagger_{l, \vec{\lambda}, \mu, \mu^\dagger}(\widetilde{\cF})_b)
\end{CD}
$$
\end{enumerate}
\end{prop}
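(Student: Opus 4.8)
The plan is to derive the Proposition formally from the projective flatness of the maps $s_\mu(\tau_0)$ established in Theorem \ref{sewingflat}, together with the factorization isomorphism \eqref{factrules}; as the paper remarks, once that flatness is in hand the statement is a mechanical unwinding of what projective flatness means at the level of monodromy. The underlying principle I would invoke is standard: a globally defined, projectively flat bundle map $\Phi : (\cE_1, [\nabla_1]) \to (\cE_2, [\nabla_2])$ over $\cB$ induces on projectivizations a map intertwining the two projective connections, so its image is a sub-bundle of $\cE_2$ stable under $[\nabla_2]$, and $[\Phi_b] \in \PGL$ conjugates the projective monodromy of $\cE_1$ onto that of the image sub-bundle.

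For part (1), I would apply this to each component $s_\mu(\tau_0) : \cV^\dagger_{l,\vec{\lambda},\mu,\mu^\dagger}(\widetilde{\cF}) \to \cV^\dagger_{l,\vec{\lambda}}(\cF_{\tau_0})$, which is an injection of sheaves-with-projective-connection over all of $\cB$ (injective because $\oplus s_\mu(\tau_0)$ is an isomorphism). By Theorem \ref{sewingflat} it is projectively flat, so its image $W_\mu \subset \cV^\dagger_{l,\vec{\lambda}}(\cF_{\tau_0})$ is a sub-bundle preserved by the WZW projective connection; consequently the fiber $W_{\mu,b}$ is invariant under $\rho_{l,\vec{\lambda}}(\gamma)$ for every $\gamma \in \pi_1(\cB,b)$. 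Since by \eqref{factrules} the $W_{\mu,b}$ furnish exactly the direct sum decomposition defining $\DD$, the monodromy $\rho_{l,\vec{\lambda}}(\gamma)$ preserves that decomposition and hence lies in $\DD$, which is the first assertion.

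For part (2), I would read off the block structure summand by summand. On the flat sub-bundle $W_\mu$ the projective connection of $\cV^\dagger_{l,\vec{\lambda}}(\cF_{\tau_0})$ corresponds, via the projectively flat isomorphism $[s_\mu(\tau_0)]$ onto its image, to the WZW connection on $\cV^\dagger_{l,\vec{\lambda},\mu,\mu^\dagger}(\widetilde{\cF})$; therefore the restriction of $\rho_{l,\vec{\lambda}}(\gamma)$ to $W_{\mu,b}$, which is precisely $p_\mu(\rho_{l,\vec{\lambda}}(\gamma))$, is carried by the fiber isomorphism $[s_\mu(\tau_0)_b]$ to $\rho_{l,\vec{\lambda},\mu,\mu^\dagger}(\gamma)$. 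Under the identification of \eqref{factrules} this says $p_\mu \circ \rho_{l,\vec{\lambda}} = \rho_{l,\vec{\lambda},\mu,\mu^\dagger}$, i.e. the diagram commutes. I expect no genuine obstacle here: all the analytic content sits in Theorem \ref{sewingflat}, and the only points requiring care are that each $s_\mu(\tau_0)$ is defined and injective over the entire base $\cB$ (so that ``flat sub-bundle'' is meaningful, not merely a statement at $b$), which is guaranteed by the factorization isomorphism, and the routine identification of $p_\mu$ with the map induced on the $\mu$-block.
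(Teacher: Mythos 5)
Your proposal is correct and follows exactly the route the paper intends: the paper states the proposition as ``an immediate consequence of the fact that the maps $s_\mu(\tau_0)$ are projectively flat'' and gives no further argument, and what you have written is precisely the standard unwinding of that fact (projectively flat injections have monodromy-invariant image sub-bundles, which by the factorization isomorphism \eqref{factrules} are the summands defining $\DD$, and conjugation by $[s_\mu(\tau_0)_b]$ identifies the induced block action with $\rho_{l,\vec{\lambda},\mu,\mu^\dagger}$). No discrepancy with the paper's approach.
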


\bigskip

In the proof of the main theorem we will use the above proposition for a slightly more general family
$\cF$ of $n$-pointed connected projective curves. We shall assume that $\cF$ satisfies the two conditions:

\begin{enumerate}
\item the curve $\cC_{(b,\tau)}$ is smooth if $\tau \not= 0$.
\item the curve $\cC_{(b,0)}$ has exactly $m$ nodes.
\end{enumerate}

The desingularizing family $\widetilde{\cF}$ will thus be a $n+2m$-pointed family.

\begin{rem}
{\em An example of  a family $\cF$ satisfying the above conditions is given in
section 6.1.2.}
\end{rem}

Given an $m$-tuple $\vec{\mu} = (\mu_1, \ldots , \mu_m) \in (P_l)^m$ of dominant weights, we denote
$\vec{\mu}^\dagger = (\mu_1^\dagger, \ldots , \mu_m^\dagger) \in (P_l)^m$. The next proposition is
shown along the same lines as Proposition \ref{imagemono}. First we note that there is a decomposition

\begin{equation} \label{factrules2}
\bigoplus_{\mu \in (P_l)^m} \cV^\dagger_{l,\vec{\lambda}, \vec{\mu},  \vec{\mu}^\dagger}(\widetilde{\cF})_b
\stackrel{\sim}{\longrightarrow}  \cV^\dagger_{l, \vec{\lambda}} (\cF_{\tau_0})_b.
\end{equation}
We denote by $\DD$ the subgroup of $\PGL(\cV^\dagger_{l, \vec{\lambda}}(\cF_{\tau_0})_b)$ preserving the above 
decomposition.

\begin{prop} \label{imagemono2}
With the above notation we have  for
any $\vec{\mu} \in (P_l)^m$ and any $\vec{\lambda} \in (P_l)^n$
\begin{enumerate}
\item the monodromy representation of the sheaf of conformal blocks $\cV^\dagger_{l, \vec{\lambda}}(\cF_{\tau_0})$ over
$\cB \times \{ \tau_0 \}$ takes values in the subgroup $\DD$, i.e.,
$$ \rho_{l, \vec{\lambda}} : \pi_1(\cB,b) \lra \DD \subset \PGL ( \cV^\dagger_{l, \vec{\lambda}}(\cF_{\tau_0})_b). $$
\item we have a commutative diagram
$$
\begin{CD}
\pi_1(\cB,b) @>\rho_{l, \vec{\lambda}}>> \DD \\
@VV = V @VVp_{\mu}V \\
\pi_1(\cB,b) @>\rho_{l, \vec{\lambda}, \vec{\mu}, \vec{\mu}^\dagger}>> \PGL( \cV^\dagger_{l, \vec{\lambda}, \vec{\mu}, 
\vec{\mu}^\dagger}(\widetilde{\cF})_b)
\end{CD}
$$
\end{enumerate}
\end{prop}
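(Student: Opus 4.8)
The plan is to reduce everything to an $m$-node analogue of Theorem \ref{sewingflat} and then to conclude exactly as in the proof of Proposition \ref{imagemono}. Concretely, once I know that the sewing map performed simultaneously at all $m$ nodes is projectively flat in the $\cB$-directions and defines, after evaluation at a small $\tau_0$, an isomorphism, the decomposition \eqref{factrules2} coming from the multi-node factorization rules immediately forces the monodromy to preserve the summands (assertion (1)) and to coincide, component by component, with the representations $\rho_{l,\vec{\lambda},\vec{\mu},\vec{\mu}^\dagger}$ (assertion (2)). So the entire content is the flatness of the generalized sewing map.

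First I would define the generalized sewing map. On the desingularizing family $\widetilde{\cF}$ the two points over the $k$-th node are $s_{n+2k-1}$ and $s_{n+2k}$, carrying weights $\mu_k$ and $\mu_k^\dagger$. For a multi-index $\vec{d}=(d_1,\dots,d_m)\in\NN^m$ I insert the canonical tensor $\gamma_{d_k}\in\cH_{\mu_k}(d_k)\otimes\cH_{\mu_k^\dagger}(d_k)$ at the $k$-th pair; as in \eqref{projectionpid} this produces a projection $\pi_{\vec{d}}\colon\cH^\dagger_{\vec{\lambda},\vec{\mu},\vec{\mu}^\dagger}\to\cH^\dagger_{\vec{\lambda}}$, and with $\psi_{\vec{d}}=\pi_{\vec{d}}(\psi)$ I form the power series $\widetilde{\psi}=\sum_{\vec{d}}\psi_{\vec{d}}\,\tau^{d_1+\cdots+d_m}$, all $m$ nodes being smoothed by the single parameter $\tau$. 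This defines $s_{\vec{\mu}}$ exactly as the one-node map $s_\mu$, and the multi-node analogues of Theorem \ref{sewingisflat}(2),(3) give convergence and, since $\cB$ is compact, a common radius $r$ allowing evaluation at a fixed $\tau_0$ with $|\tau_0|<r$.

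The core step is $\cB$-flatness, which follows the proof of Theorem \ref{sewingflat} via the $m$-node version of Lemma \ref{vectorfield}. Shrinking $U$, I lift a vector field $\partial$ on $U$ to a fibre-constant field $\vec{\ell}$ on $\widetilde{\cC}_{|U}$, obtained by first lifting $\partial$ to the nodal family $\widetilde{\cC}'$ got by gluing all $m$ pairs $s_{n+2k-1}(\cB)\sim s_{n+2k}(\cB)$, arranged so that near the $k$-th pair one has $\vec{\ell}_{|X_k}=z_k\frac{d}{dz_k}+\partial$ and $\vec{\ell}_{|Y_k}=-w_k\frac{d}{dw_k}+\partial$, so that $\theta(\vec{\ell})=\partial$ in the sense of \eqref{deftheta}. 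The Virasoro operator then decomposes as $T[\vec{\ell}]=\sum_{i=1}^{n}T[\ell_i]+\sum_{k=1}^{m}\bigl(L_0^{(n+2k-1)}-L_0^{(n+2k)}\bigr)$, and projecting the flatness equation by $\pi_{\vec{d}}$ the two $L_0$-contributions at the $k$-th node give $(\Delta_{\mu_k}+d_k)$ and $(\Delta_{\mu_k^\dagger}+d_k)$, which cancel because $\Delta_{\mu_k}=\Delta_{\mu_k^\dagger}$. Summing against $\tau^{d_1+\cdots+d_m}$ yields $\nabla_\partial\widetilde{\psi}=0\bmod\cO_{\cB\times\Omega^0}\widetilde{\psi}$, exactly as in the one-node case; flatness in the $\tau$-direction is inherited from the analogue of Theorem \ref{sewingisflat}(1).

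The main obstacle I anticipate is purely technical: producing a single fibre-constant vector field $\vec{\ell}$ with the prescribed local forms at all $m$ nodes at once, and checking that the Virasoro contributions cancel separately at each node. Because the trace anomalies agree node by node and the smoothing parameter is common to all nodes, no genuinely new phenomenon appears and the cancellation is identical to that in Theorem \ref{sewingflat}. With the projectively flat isomorphism $\bigoplus_{\vec{\mu}}s_{\vec{\mu}}(\tau_0)$ of \eqref{factrules2} established, the monodromy of $\cV^\dagger_{l,\vec{\lambda}}(\cF_{\tau_0})$ over $\cB$ must preserve the decomposition, hence lands in $\DD$, and its $p_{\vec{\mu}}$-component agrees with $\rho_{l,\vec{\lambda},\vec{\mu},\vec{\mu}^\dagger}$, which is precisely the commutative diagram of the statement.
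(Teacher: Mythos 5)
Your proposal is correct and follows exactly the route the paper intends: the paper dispenses with Proposition \ref{imagemono2} by declaring it ``shown along the same lines as Proposition \ref{imagemono}'', and what you have written is precisely that argument carried out --- the multi-node sewing map with the single smoothing parameter $\tau$, the $m$-node version of Lemma \ref{vectorfield}, the node-by-node cancellation of the $L_0$-contributions via $\Delta_{\mu_k}=\Delta_{\mu_k^\dagger}$, and the conclusion from the projectively flat decomposition \eqref{factrules2}. Your write-up supplies more detail than the paper itself does, but there is no difference in approach.
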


\subsection{Proof of the Theorem}

We will now prove the theorem stated in the introduction. We know by \cite{L}
assuming\footnote{In fact in \cite{L} one makes the assumption $g\geq 3$ for simplicity. It can be shown that
the isomorphism also holds for $g=2$.} $g \geq 2$ that there is a projectively flat isomorphism between the two projectivized
vector bundles
$$ \PP \cZ_l \stackrel{\sim}{\longrightarrow} \PP \cV^\dagger_{l, \emptyset} $$
equipped with the Hitchin connection and the WZW connection respectively. Here
$\cV^\dagger_{l, \emptyset}$ stands for the sheaf of conformal blocks
$\cV^\dagger_{l, 0}(\cF)$ associated
to the family $\cF = ( \pi: \cC \ra \cB ; s_1)$ of curves with one point labeled
with the trivial representation $\lambda_1 = 0$ (propagation of vacua). 

\bigskip

We consider the family of hyperelliptic curves $\cF = \cF_g^{hyp}$
defined in section 6.1.2 for some $\alpha$ with $|\alpha| > 1$. The family $\widetilde{\cF}$ which
desingularizes the nodal curves $\cF_{|\cB \times \{ 0 \} }$ is a family of $(2g+1)$-pointed rational
curves with points 
$$ \infty, 1,-1, u,-u, 3\alpha, - 3\alpha, \ldots , g \alpha, - g \alpha. $$
We then deduce from  Proposition \ref{imagemono2} (2) applied to the family $\cF = \cF^{hyp}_g$ with $n= 1$, $m= g$ and the choice of
weights $\lambda_1 = 0$ and $\mu_1 = \cdots = \mu_g = \varpi$, where we associate the weight $0$ to $\infty$ and
the weight $\varpi$ to  the remaining $2g$ points (note that $\varpi = \varpi^\dagger$), that
it suffices to show that the monodromy representation 
$$ \pi_1(\cB,b) \lra  \PGL( \cV^\dagger_{l, 0 , \varpi , \ldots , \varpi }(\widetilde{\cF})_b) $$
has an element of infinite order in its image. 

\bigskip

In order to show the last statement we consider the family of rational curves $\cF^{rat}_{2g+1}$ defined in 
section 6.1.1. The family $\widetilde{\cF}$ which
desingularizes the nodal curves $\cF_{|\cB \times \{ 0 \} }$ is a family of $(2g+3)$-pointed rational
curves consisting of the disjoint union of two projectve lines with $5$ points $0, 1,-1, u,-u$ on one projective line and $2g -2$ points  $0, \infty , 3, - 3, \ldots , g, -g$ on the second projective line.

\bigskip

Next we observe that the conformal block for the projective line  with $2g-2$ marked  points $0, \infty, 3, - 3, \ldots ,
g, - g$ with the zero weight at the points $0$ and $\infty$ and the weight $\varpi$ at the other
$2g-4$ points is non-zero. This follows from an iterated use of the propagation of vacua, 
the factorization rules and from the fact that $\dim \cV_{l,\varpi, \varpi}(\PP^1) = 1$. 

\bigskip

The previous observation together with Corollary \ref{fivepoints} then implies that the
family $\widetilde{\cF}$ of $(2g+2)$-pointed curves with weights 
$0, \varpi , \varpi , \varpi, \varpi$
on the $5$ points $0,1,-1,u,-u$ of the first projective line and weights
$0,0, \varpi, \ldots , \varpi$ on the $2g-2$ points $0, \infty, 3, - 3, \ldots ,
g, - g$ on the $2g-2$ points on the second
projective line has infinite monodromy.
We then deduce from Proposition \ref{imagemono} (2) applied to the family $\cF = \cF^{rat}_{2g+1}$ with 
$\vec{\lambda} = (\varpi, \ldots, \varpi)$  and $\mu = 0$ that the monodromy representation 
$$ \pi_1(\cB,b) \lra  \PGL( \cV^\dagger_{l, 0,  \varpi, \ldots , \varpi}(\widetilde{\cF})_b) $$
has an element of infinite order in its image, which completes the proof.

\begin{rem} \label{explicitfamily}

{\em For the convenience of the reader we recall that we have taken the family of smooth hyperelliptic curves
given by the affine equation \eqref{hyperelliptic} for two complex numbers $\alpha$ and $\tau$ with $\alpha^{-1}$ and $\tau$
sufficiently small --- note that $\alpha^{-1}$ and $\tau$ measure the size of the domain where the sewing elements 
$\hat{\psi}$ for the two families $\cF^{rat}_{2g +1}$ and $\cF^{hyp}_g$ converge. 
The parameter $u$ varies in $\cB$ as defined in \eqref{baseB}.
Then the loop $\xi \in \pi_1(\cB,i)$ which starts at $i$ and goes once around the points $-1$ and $0$ has monodromy of
infinite order.}

\end{rem}

\begin{rem}
{\em The previous argument, which proves the theorem for the Lie algebra $\slfr(2)$, fails when considering
the Lie algebras $\slfr(r)$ with $r > 2$. The main reason is the fact $\varpi^\dagger \not= \varpi$ for $ r > 2$,
where $\varpi$ is the first fundamental weight. }
\end{rem}

\section{Finiteness of the monodromy representation in genus one}

In this section we collect for the reader's convenience some existing results on the
monodromy representation on the conformal blocks associated to the Lie algebra $\slfr(2)$ for 
a family of one-marked elliptic curves labeled with the trivial representation. We consider the
upper half plane $\mathbb{H} = \{ \omega \in \mathbb{C} \ | \ \mathrm{Im} \  \omega  > 0 \}$ with the
standard action of the modular group $\PP \SL(2, \ZZ)$, which is generated by the two elements
$$ S =   \left(
\begin{array}{cc}
0 & -1  \\
1 & 0
\end{array}
 \right), \qquad 
 T =   \left(
\begin{array}{cc}
1 & 1  \\
0 & 1
\end{array}
 \right) \qquad \text{satisfying} \ S^2 = (ST)^3 = e.$$
Let $\cF$ denote the universal family of elliptic curves parameterized by $\mathbb{H}$. We denote by
$\cV^\dagger_{l,0}(\cF)$ the sheaf of conformal blocks of level $l$ with trivial 
representation at the origin. The sheaf $\cV^\dagger_{l,0}(\cF)$ has rank $l+1$ and for each $\lambda
\in P_l$ we obtain by the sewing procedure a section $\widehat{\psi}_\lambda$ over $\mathbb{H}$ given
by the formal series
$$ \langle \widehat{\psi}_\lambda(\omega) | \phi \rangle = \tau^{\Delta_\lambda} 
\sum_{d = 0}^\infty \langle \psi | \gamma_d \otimes \phi \rangle  \tau^d, \qquad \text{with} \  \ \tau = \exp(2i \pi \omega), $$
where $\psi$ is the unique (up to a multiplicative scalar) section of $\cV^\dagger_{l,\lambda,\lambda,0} (\PP^1)$
and $\phi$ is any element in $\cH_\lambda$.
Because of Theorem \ref{sewingisflat} the $l+1$ sections $\widehat{\psi}_\lambda$ are projectively flat for the
projective WZW connection on $\cV^\dagger_{l,0}(\cF)$ and are linearly independent by the factorization rules 
\eqref{factrules}.  Note that this decomposition of the sheaf $\cV^\dagger_{l,0}(\cF)$ into a sum of rank-$1$
subsheaves corresponds to a degeneration to the nodal elliptic curve given by $\mathrm{Im} \ \omega \rightarrow
\infty$, or equivalently $\tau = \exp(2i \pi \omega) \ra 0$. Moreover by 
evaluating the sections $\widehat{\psi}_\lambda$ at the highest weight vector $\phi = v_\lambda \in \cH_\lambda$
we obtain analytic functions
$\chi_\lambda(\omega) = \langle \widehat{\psi}_\lambda (\omega) | v_\lambda \rangle$, which correspond to the
character of the representations $\cH_\lambda$:
$$ \chi_\lambda(\omega) = \sum_{d=0}^\infty \sum_{i=1}^{m_d} (v_i(d)|v^i(d)) \tau^{\Delta_\lambda + d} = 
\sum_{d=0}^\infty \dim \cH_\lambda(d) \tau^{\Delta_\lambda + d} = \mathrm{tr}_{\cH_\lambda} (\tau^{L_0}),$$
see e.g. \cite{U} equation (4.3.1).  This shows that in the genus one case the local system given by the conformal blocks
with trivial marking equipped with the WZW connection coincides with the local system given by the characters
$\chi_\lambda(\omega)$. 
Moreover the monodromy action of the modular group $\PP \SL(2, \ZZ)$
on the vector space spanned by the characters $\{ \chi_\lambda(\omega) \}_{\lambda \in P_l}$ has been determined.

\begin{prop}[\cite{GW}]
The monodromy representation
$$ \rho_l: \PP \SL(2, \ZZ) \lra \PGL (l+1)$$
is given by the two unitary matrices $\rho(S)$ and $\rho(T)$
\begin{eqnarray*}
\rho(S)_{jk} &=& \sqrt{\frac{2}{l+2}} \sin \left( \frac{\pi jk}{l+2}   \right), \\
\rho(T)_{jk} & = & \delta_{jk} \exp\left(i\pi ( \frac{j^2}{2(l+2)} - \frac{1}{4}) \right).
\end{eqnarray*}
\end{prop}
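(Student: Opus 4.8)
The plan is to recognize the functions $\chi_\lambda(\omega) = \mathrm{tr}_{\cH_\lambda}(\tau^{L_0})$, with $\tau = \exp(2i\pi\omega)$, as the specialized integrable characters of the affine Lie algebra $\widehat{\slfr(2)}$ at level $l$, and to derive their modular behaviour from the Weyl--Kac character formula together with the classical transformation law of theta functions. First I would write each character, after correcting by the central-charge factor $\tau^{-c/24}$ with $c = \frac{3l}{l+2}$, as a quotient of antisymmetrized theta functions
\[
\tau^{-c/24}\chi_\lambda(\omega) = \frac{\Theta_{\lambda+1}^{(l+2)}(\omega) - \Theta_{-(\lambda+1)}^{(l+2)}(\omega)}{\Theta_{1}^{(2)}(\omega) - \Theta_{-1}^{(2)}(\omega)}, \qquad \Theta_n^{(m)}(\omega) = \sum_{k \in \ZZ} \exp\Big(2i\pi m \omega \big(k + \tfrac{n}{2m}\big)^2\Big),
\]
which is the $\slfr(2)$ instance of the Weyl--Kac formula: the numerator is the alternating sum over the affine Weyl group at level $l + h^\vee = l+2$ evaluated at $\lambda + \varpi$ (corresponding to $\lambda+1$), and the denominator is the Weyl denominator at level $h^\vee = 2$. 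Since the monodromy representation is only projective, multiplying all $l+1$ characters by the common scalar $\tau^{-c/24}$ is harmless, and it is exactly what is needed to make the family $\{\tau^{-c/24}\chi_\lambda\}$ transform into itself under $\PP\SL(2,\ZZ)$.

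The $T$-transformation is then immediate from the $q$-expansion. Under $\omega \mapsto \omega+1$ the corrected character is multiplied by the scalar $\exp\big(2i\pi(\Delta_\lambda - c/24)\big)$, and a one-line Casimir computation with $\Delta_\lambda = \frac{\lambda(\lambda+2)}{4(l+2)}$ and $c = \frac{3l}{l+2}$ rewrites the exponent, upon setting $j = \lambda+1$, as $i\pi\big(\frac{j^2}{2(l+2)} - \frac14\big)$. This reproduces precisely the stated diagonal matrix $\rho(T)$.

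For $\rho(S)$ I would apply Poisson summation to each theta function, obtaining the finite Fourier transform
\[
\Theta_n^{(m)}(-1/\omega) = \sqrt{\tfrac{-i\omega}{2m}} \sum_{n' \bmod 2m} \exp\Big(-\tfrac{i\pi n n'}{m}\Big)\,\Theta_{n'}^{(m)}(\omega).
\]
Substituting this into both the numerator and the denominator of the quotient, the automorphy factors $\sqrt{-i\omega}$ cancel between level $l+2$ and level $2$; antisymmetrizing the resulting sums collapses the exponentials into a sine kernel, and the surviving normalization $\sqrt{2/(l+2)}$ produces $\rho(S)_{jk} = \sqrt{2/(l+2)}\,\sin\!\big(\pi jk/(l+2)\big)$ with $j,k$ running over $1,\dots,l+1$. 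Finally I would record that these matrices are unitary and satisfy $S^2 = (ST)^3 = e$ projectively, consistently with the presentation of $\PP\SL(2,\ZZ)$.

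I expect the only genuine obstacle to be bookkeeping: pinning down the theta-function and anomaly conventions so that the $\sqrt{-i\omega}$ factors cancel exactly and one lands on the precise prefactor $\sqrt{2/(l+2)}$ and the phase $-\tfrac14$ in $\rho(T)$, rather than on the same matrices up to an overall scalar. The analytic input (Poisson summation and the Weyl--Kac formula) is entirely standard; it is the matching of normalizations that is delicate, which is presumably why the paper contents itself with citing \cite{GW}.
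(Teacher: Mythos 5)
The paper does not actually prove this proposition --- it is quoted verbatim from Gepner--Witten \cite{GW} --- so the relevant benchmark is the standard derivation in the literature, which is exactly the route you propose: identify the $\chi_\lambda$ with the normalized affine characters, express them via the Weyl--Kac formula, and read off $\rho(T)$ from the $q$-expansion and $\rho(S)$ from Poisson summation. Your $T$-computation is correct: $\Delta_\lambda - c/24 = \frac{\lambda(\lambda+2)}{4(l+2)} - \frac{l}{8(l+2)} = \frac{j^2}{4(l+2)} - \frac{1}{8}$ with $j = \lambda+1$, giving the stated phase. And you are right that working in $\PP\GL(l+1)$ absorbs the $\tau^{-c/24}$ normalization.

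However, the central display of your argument is degenerate as written. In your one-variable theta function $\Theta_n^{(m)}(\omega) = \sum_{k\in\ZZ}\exp\bigl(2i\pi m\omega(k+\frac{n}{2m})^2\bigr)$ the summand depends only on $(k+\frac{n}{2m})^2$, so the substitution $k\mapsto -k$ gives $\Theta_n^{(m)} = \Theta_{-n}^{(m)}$ identically; both your numerator and your denominator are therefore the zero function, and the quotient is $0/0$. The Weyl--Kac formula lives on $\mathbb{H}\times\CC$ with a Cartan variable $z$, the antisymmetrization is in $z$, and the specialization to $z=0$ must be taken as a limit. Concretely, the correct specialized character is the ratio of $z$-derivatives at $z=0$, i.e.
$$
\tau^{-c/24}\chi_\lambda(\omega) \;=\; \frac{\displaystyle\sum_{k\in\ZZ}\bigl(2(l+2)k+j\bigr)\,\tau^{\frac{(2(l+2)k+j)^2}{4(l+2)}}}{\displaystyle\sum_{k\in\ZZ}(4k+1)\,\tau^{\frac{(4k+1)^2}{8}}},\qquad j=\lambda+1,
$$
where the linear prefactor carries the antisymmetry that your difference of one-variable thetas was meant to encode. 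With this correction your Poisson-summation step does go through: differentiating the two-variable transformation law in $z$ at $z=0$ produces an extra factor of $\omega$ (from $z\mapsto z/\omega$) which is common to numerator and denominator, the residual $\sqrt{-i\omega}$ factors cancel in the ratio of levels $l+2$ and $2$, and the antisymmetrized exponentials collapse to $\sqrt{2/(l+2)}\,\sin\bigl(\pi jk/(l+2)\bigr)$. So the gap is not in the strategy but in the specific formula you chose to launch it from; as stated, every subsequent manipulation would be performed on the zero function.
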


With this notation the main statement of this section is the following

\begin{thm}
The image of the representation $\rho_l$ is finite.
\end{thm}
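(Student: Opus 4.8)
The plan is to prove the stronger statement that the \emph{linear} group $G = \langle \rho(S), \rho(T) \rangle \subset \GL(l+1, \CC)$ is finite; its image in $\PGL(l+1)$, which is exactly $\rho_l(\PP\SL(2,\ZZ))$, is then finite as well. Two features of the generators are immediate from the formulas: both $\rho(S)$ and $\rho(T)$ are unitary, and both have finite order (the matrix $\rho(T)$ is diagonal with roots of unity as entries, while $\rho(S)^2$ is a scalar because $S^2$ is central in $\SL(2,\ZZ)$). Thus $G$ is a finitely generated subgroup of the compact group $\mathrm{U}(l+1)$ consisting of matrices with algebraic entries. The strategy is then to realize $G$ as a subgroup of $\GL(l+1, K)$ for a fixed number field $K$ which is bounded at \emph{every} place of $K$, and to conclude by the discreteness of $\GL(l+1, K)$ inside the adelic group $\GL(l+1, \mathbb{A}_K)$: a subgroup of $\GL(l+1, K)$ contained in a compact subset of $\GL(l+1, \mathbb{A}_K)$ is finite.

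First I would record that all entries of $\rho(S)$ and $\rho(T)$ lie in the cyclotomic field $K = \QQ(\zeta_m)$ with $m = 8(l+2)$: the phases $\exp(i\pi(\tfrac{j^2}{2(l+2)} - \tfrac14))$ are roots of unity in $K$, the values $\sin(\tfrac{\pi jk}{l+2})$ lie in $\QQ(\zeta_{2(l+2)})$, and the normalizing factor $\sqrt{2/(l+2)}$ lies in $K$ as a Gauss sum. Hence $G \subset \GL(l+1, K)$. To control the archimedean places, which correspond to the embeddings $\sigma \colon K \hookrightarrow \CC$, i.e. to the elements of $\Gal(K/\QQ)$, I would invoke the Galois symmetry of the modular matrices: for every $\sigma \in \Gal(K/\QQ)$ the conjugated matrices $\rho^\sigma(S)$ and $\rho^\sigma(T)$ are again unitary. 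This is a direct computation, since $\sigma$ permutes the relevant roots of unity and sends $\sqrt{2/(l+2)}$ to $\pm\sqrt{2/(l+2)}$, so that $\rho^\sigma(S)$ equals $\rho(S)$ up to a signed permutation of its rows while $\rho^\sigma(T)$ stays diagonal unitary. Consequently $G^\sigma \subset \mathrm{U}(l+1)$ for every $\sigma$, and $G$ is bounded at every archimedean place of $K$. For the finite places I would first treat the primes $\mathfrak{p} \nmid 2(l+2)$: there $\sqrt{2/(l+2)}$ is a unit and $2\sin(\tfrac{\pi jk}{l+2})$ is an algebraic integer, so both generators and their inverses lie in $\GL(l+1, \cO_{K,\mathfrak{p}})$, and hence so does all of $G$. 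Thus every matrix entry of every element of $G$ is an $S_0$-integer, where $S_0$ is the finite set of places of $K$ lying over $2(l+2)$, and is bounded at every infinite place.

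The one place where boundedness is not visible from the formulas, and the step I expect to be the main obstacle, is at the finite primes $\mathfrak{p} \mid 2(l+2)$. There the factor $\sqrt{2/(l+2)}$ has negative valuation, so $\rho(S)$ is genuinely non-integral, and it is not clear a priori that the denominators of products of the generators remain bounded; equivalently, one must produce a $G$-stable $\cO_{K,\mathfrak{p}}$-lattice at each such $\mathfrak{p}$. The clean way to supply exactly this input is the congruence-subgroup property of these modular representations: the matrices $\rho(S), \rho(T)$ are those of the Weil representation attached to the discriminant form of order $2(l+2)$, and such a representation factors through the finite quotient $\SL(2, \ZZ/N\ZZ)$ for a suitable $N \mid 4(l+2)$. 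This factorization makes the $\mathfrak{p}$-adic boundedness automatic, and in fact already yields the theorem directly, since it exhibits the image as a quotient of the finite group $\SL(2, \ZZ/N\ZZ)$. With this last ingredient, $G$ is bounded at every place of $K$, and therefore $G$ — a subgroup of the discrete group $\GL(l+1, K) \subset \GL(l+1, \mathbb{A}_K)$ lying in a compact set — is finite, which proves the theorem.
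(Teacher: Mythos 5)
Your overall strategy is the same one the paper relies on: the paper's proof is a pointer to Gilmer \cite{G}, whose argument is precisely ``all Galois conjugates of the representation are unitary, and all matrix entries have uniformly bounded denominators, hence only finitely many algebraic numbers can occur as entries.'' Your archimedean analysis (unitarity of $\rho^\sigma(S)$, $\rho^\sigma(T)$ for every $\sigma \in \Gal(K/\QQ)$, via the permutation action on roots of unity) and your treatment of the primes away from $2(l+2)$ are correct and are exactly the inputs Gilmer uses. You have also correctly located the crux: nothing in the shape of the generators controls the denominators of long words in $\rho(S)$ and $\rho(T)$ at the primes over $2(l+2)$.

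The divergence is in how that crux is filled, and this is where your argument is not self-supporting. The paper (via \cite{G}) supplies the missing input from Jeffrey's closed formula \cite{J} for $\rho_l(U)$ for an \emph{arbitrary} $U \in \PP\SL(2,\ZZ)$, which shows directly that every entry of every element of the image lies in $\frac{1}{2(l+2)}\ZZ[\exp(\frac{i\pi}{4(l+2)})]$ --- a uniform denominator bound obtained by computation, independent of finiteness. You instead invoke the congruence subgroup property of the Weil representation attached to the discriminant form of order $2(l+2)$, i.e.\ that $\rho_l$ factors through $\SL(2,\ZZ/N\ZZ)$. That statement is true and citable, but, as you yourself note, it is strictly stronger than the theorem: it yields finiteness of the image in one line, so the entire adelic scaffolding you built does no work. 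If the citation is granted, your proof is correct but the first two paragraphs are redundant; if it is not granted, there is a genuine hole exactly at the primes dividing $2(l+2)$, because you have produced no $G$-stable lattice there and no bound on denominators of products. To make the argument self-contained in the spirit you intend, you would need either Jeffrey's explicit formula (as the paper does) or an independently constructed invariant $\cO_{K,\mathfrak{p}}$-lattice at the bad primes; also note that identifying the paper's normalization of $\rho(S)$, $\rho(T)$ (including the $-\frac{1}{4}$ phase and the restriction to the odd part of the discriminant form) with the Weil representation is itself a verification you would have to carry out before the congruence property applies.
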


\begin{proof}
Using the explicit expression of the matrix $\rho_l(U)$ for any element $U \in \PP \SL(2, \ZZ)$ computed in
\cite{J} section 2, it is shown in \cite{G} section 2 that the matrix $\rho_l(U)$ has all its entries in the
set $\frac{1}{2(l+2)} \ZZ [ \exp(\frac{i\pi}{4(l+2)} )]$. Since moreover the representation $\rho_l$ 
is unitary, we may deduce finiteness along the same lines as in \cite{G} proof of corollary.
\end{proof}

\end{document}